\providecommand{\U}[1]{\protect\rule{.1in}{.1in}}
\providecommand{\U}[1]{\protect\rule{.1in}{.1in}}
\providecommand{\U}[1]{\protect\rule{.1in}{.1in}}
\newtheorem{theorem}{Theorem}[section]
\newtheorem{corollary}[theorem]{Corollary}
\newtheorem{proposition}[theorem]{Proposition}
\newtheorem{lemma}[theorem]{Lemma}
\newtheorem*{teoremavectorial}{{Theorem}}
\theoremstyle{definition}
\newtheorem{example}[theorem]{Example}
\newtheorem{remark}[theorem]{Remark}
\begin{document}
\title[Optimal exponents for Hardy--Littlewood inequalities for $m$-linear
operators]{Optimal exponents for Hardy--Littlewood inequalities for $m$%
-linear operators}
\author[Aron]{R. M. Aron}
\address{Department of Mathematical Sciences\\
\indent Kent State University\\
\indent Kent, Ohio 44242, USA}
\email{aron@math.kent.edu}
\author[N\'u\~nez]{D. N\'{u}\~{n}ez-Alarc\'{o}n}
\address{Departamento de Matem\'{a}tica\\
\indent Universidade Federal de Pernambuco\\
\indent50.740-560 - Recife, Brazil\\
\indent\& Department of Mathematical Sciences\\
\indent Kent State University\\
\indent Kent, Ohio 44242, USA}
\email{danielnunezal@gmail.com}
\author[Pellegrino]{D. M. Pellegrino}
\address{Departamento de Matem\'{a}tica\\
\indent Universidade Federal da Para\'{\i}ba,\\
\indent58.051-900 - Jo\~{a}o Pessoa, Brazil}
\email{pellegrino@pq.cnpq.br and dmpellegrino@gmail.com}
\author[Serrano]{D. M. Serrano-Rodr\'{\i}guez}
\address{Departamento de Matem\'{a}tica\\
\indent Universidade Federal de Pernambuco\\
\indent50.740-560 - Recife, Brazil\\
\indent\& Department of Mathematical Sciences\\
\indent Kent State University\\
\indent Kent, Ohio 44242, USA}
\email{dmserrano0@gmail.com}
\thanks{R. Aron is supported in part by MINECO MTM2014-57838-C2-2-P and
Prometeo II/2013/013, D. N\'{u}\~{n}ez is supported by Capes Grant
000785/2015-06, D. Pellegrino is supported by CNPq and D. M. Serrano is
supported by CAPES Grant 000786/2015-02.}
\subjclass[2010]{47B37, 47B10}
\keywords{Absolutely summing operators; multilinear forms; multilinear
operators; Hardy--Littlewood inequality}

\begin{abstract}
The Hardy--Littlewood inequalities on $\ell _{p}$ spaces provide optimal
exponents for some classes of inequalities for bilinear forms on $\ell _{p}$
spaces. In this paper we investigate in detail the exponents involved in
Hardy--Littlewood type inequalities and provide several optimal results that
were not achieved by the previous approaches. Our first main result asserts
that for $q_{1},...,q_{m}>0$ and an infinite-dimensional Banach space $Y$
attaining its cotype $\cot Y$, if
\begin{equation*}
\frac{1}{p_{1}}+...+\frac{1}{p_{m}}<\frac{1}{\cot Y},
\end{equation*}%
then the following assertions are equivalent:

(a) There is a constant $C_{p_{1},...,p_{m}}^{Y}\geq 1$ such that%
\begin{equation*}
\left( \sum_{j_{1}=1}^{\infty }\left( \sum_{j_{2}=1}^{\infty }\cdots \left(
\sum_{j_{m}=1}^{\infty }\left\Vert A(e_{j_{1}},...,e_{j_{m}})\right\Vert
^{q_{m}}\right) ^{\frac{q_{m-1}}{q_{m}}}\cdots \right) ^{\frac{q_{1}}{q_{2}}%
}\right) ^{\frac{1}{q_{1}}}\leq C_{p_{1},...,p_{m}}^{Y}\left\Vert
A\right\Vert
\end{equation*}%
for all continuous $m-$linear operators $A:\ell _{p_{1}}\times \cdots \times
\ell _{p_{m}}\rightarrow Y.$

(b) The exponents $q_{1},...,q_{m}$ satisfy%
\begin{equation*}
q_{1}\geq \lambda _{m,\cot Y}^{p_{1},...,p_{m}},q_{2}\geq \lambda _{m-1,\cot
Y}^{p_{2},...,p_{m}},...,q_{m}\geq \lambda _{1,\cot Y}^{p_{m}},
\end{equation*}%
where, for $k=1,...,m,$
\begin{equation*}
\lambda _{m-k+1,\cot Y}^{p_{k},...,p_{m}}:=\frac{\cot Y}{1-\left( \frac{1}{%
p_{k}}+...+\frac{1}{p_{m}}\right) \cot Y}.
\end{equation*}%
As an application of the above result we generalize to the $m$-linear
setting one of the classical Hardy--Littlewood inequalities for bilinear
forms. Our result is sharp in a very strong sense: the constants and
exponents are optimal, even if we consider mixed sums.
\end{abstract}

\maketitle

\section{Introduction}

Let $\mathbb{K}$ be the real or complex scalar field. In 1934 Hardy and
Littlewood proved three theorems (Theorems \ref{hl1}, \ref{hl2}, \ref{hl3},
below) on the summability of bilinear forms on $\ell _{p}\times \ell _{q}$
(here, and henceforth, when $p=\infty $ we consider $c_{0}$ instead of $\ell
_{\infty })$. For any function $f$ we shall consider $f(\infty
):=\lim_{s\rightarrow \infty }f(s)$ and for any $s\geq 1$ we denote the
conjugate index of $s$ by $s^{\ast },$ i.e., $\frac{1}{s}+\frac{1}{s^{\ast }}%
=1$.

For all $p,q\in (1,\infty ]$, such that $\frac{1}{p}+\frac{1}{q}<1$, let us
define
\begin{equation*}
\lambda :=\frac{pq}{pq-p-q},
\end{equation*}%
and
\begin{equation*}
\mu =\frac{4pq}{3pq-2p-2q}.
\end{equation*}

If $p$ and $q$ are simultaneously $\infty $, then $\lambda $ and $\mu $ are $%
1$ and $4/3$ respectively.

\smallskip

From now on, $\left( e_{k}\right) _{k=1}^{\infty }$ denotes the sequence of
canonical vectors in $\ell _{p}$.

\begin{theorem}
\label{hl1}(See Hardy and Littlewood \cite[Theorem 1]{hardy}) Let $p,q\in
\lbrack 2,\infty ]$, with $\frac{1}{p}+\frac{1}{q}\leq \frac{1}{2}$. There
is a constant $C_{p,q}\geq 1$ such that%
\begin{equation}
\left( \sum_{j_{1}=1}^{\infty }\left( \sum_{j_{2}=1}^{\infty }\left\vert
A(e_{j_{1}},e_{j_{2}})\right\vert ^{2}\right) ^{\frac{\lambda }{2}}\right) ^{%
\frac{1}{\lambda }}\leq C_{p,q}\left\Vert A\right\Vert ,  \label{hl1a}
\end{equation}%
and%
\begin{equation}
\left( \sum_{j_{1},j_{2}=1}^{\infty }\left\vert
A(e_{j_{1}},e_{j_{2}})\right\vert ^{\mu }\right) ^{\frac{1}{\mu }}\leq
C_{p,q}\left\Vert A\right\Vert ,  \label{hl1c}
\end{equation}%
for all continuous bilinear forms $A:\ell _{p}\times \ell _{q}\rightarrow
\mathbb{K}$.
\end{theorem}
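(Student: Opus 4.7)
The plan is to prove (\ref{hl1a}) first, by combining Khinchine's inequality with a Hölder/duality argument, and then to deduce (\ref{hl1c}) from (\ref{hl1a}) by interpolating between two swapped orientations of it.

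For (\ref{hl1a}), after truncating to finite matrices $a_{j_{1}j_{2}}:=A(e_{j_{1}},e_{j_{2}})$, I would invoke Khinchine's inequality for each fixed $j_{1}$ to replace the inner $\ell_{2}$-sum by a Rademacher expectation of a signed linear combination, and simultaneously dualise the outer $\ell_{\lambda}$-norm against a test sequence $c\in B_{\ell_{\lambda^{\ast}}}$. The resulting expression, rewritten as an expectation of $A$ evaluated on explicit test vectors built from $c$, the Rademacher signs, and suitably $\ell_{q}$-normalised weights, can then be bounded by $\|A\|$ times Hölder factors via the relation $\tfrac{1}{p}+\tfrac{1}{q}+\tfrac{1}{\lambda}=1$ encoded in the definition of $\lambda$. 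The assumption $\tfrac{1}{p}+\tfrac{1}{q}\leq\tfrac{1}{2}$ forces $\lambda\geq 2$, which is exactly what is needed for Minkowski's inequality to commute the outer $\ell_{\lambda}$-norm past the Rademacher expectation in the correct direction.

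For (\ref{hl1c}), two applications of (\ref{hl1a}) — the original and the one obtained by interchanging the roles of the indices — yield
$$
\Bigl(\textstyle\sum_{j_{1}}\bigl(\sum_{j_{2}}|a_{j_{1}j_{2}}|^{2}\bigr)^{\lambda/2}\Bigr)^{1/\lambda}\leq C\|A\|
\quad\text{and}\quad
\Bigl(\textstyle\sum_{j_{2}}\bigl(\sum_{j_{1}}|a_{j_{1}j_{2}}|^{2}\bigr)^{\lambda/2}\Bigr)^{1/\lambda}\leq C\|A\|.
$$
Interpolating between the mixed-norm spaces $\ell_{\lambda}(\ell_{2})$ and $\ell_{2}(\ell_{\lambda})$ at $\theta=\tfrac{1}{2}$ one lands in the diagonal $\ell_{\mu}(\ell_{\mu})=\ell_{\mu}$, with
$$
\tfrac{1}{\mu}=\tfrac{1}{2}\bigl(\tfrac{1}{\lambda}+\tfrac{1}{2}\bigr)=\tfrac{3}{4}-\tfrac{1}{2p}-\tfrac{1}{2q}=\tfrac{3pq-2p-2q}{4pq},
$$
exactly matching the claimed $\mu=\tfrac{4pq}{3pq-2p-2q}$. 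Concretely, this can be done by writing $|a_{j_{1}j_{2}}|^{\mu}=|a_{j_{1}j_{2}}|^{\mu/2}\cdot|a_{j_{1}j_{2}}|^{\mu/2}$ and applying Cauchy--Schwarz followed by Hölder on the two factors.

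The main obstacle lies in Step~1: selecting the test vectors after Khinchine so that their $\ell_{p}$- and $\ell_{q}$-norms are simultaneously at most $1$, so that the bilinear evaluation produces exactly $\|A\|$ with no residual factor depending on the truncation dimension. A naïve unweighted Khinchine substitution of $\pm 1$'s into the second slot of $A$ produces a spurious $n^{1/q}$ blow-up; the cure is to insert an $\ell_{q}$-normalised weight into the Rademacher sum before averaging and to tune it against the dual vector $c$, after which Hölder with the prescribed exponents yields a dimension-free bound. Once this delicate balancing is secured, Step~2 reduces to a routine mixed-norm interpolation.
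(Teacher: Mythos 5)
First, a framing remark: the paper itself contains no proof of Theorem \ref{hl1} --- it is quoted as a classical result of Hardy and Littlewood \cite{hardy} --- so your proposal can only be compared with the classical argument. Your Step 2 is essentially that classical argument (Littlewood's interpolation trick, which is how Hardy and Littlewood themselves pass from the mixed estimate to the uniform exponent $\mu$), and it is sound: applying (\ref{hl1a}) to $A$ and to its transpose $A^{t}:\ell_{q}\times\ell_{p}\rightarrow\mathbb{K}$ is legitimate since the hypothesis is symmetric in $p,q$, and the exponent arithmetic $\frac{1}{\mu}=\frac{1}{2}\left(\frac{1}{\lambda}+\frac{1}{2}\right)$ is correct. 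One caveat: literal complex interpolation between $\ell_{\lambda}(\ell_{2})$ and $\ell_{2}(\ell_{\lambda})$ is not available, because the two mixed norms nest the indices in \emph{opposite} orders; your concrete substitute (splitting $|a_{j_{1}j_{2}}|^{\mu}=|a_{j_{1}j_{2}}|^{\mu/2}|a_{j_{1}j_{2}}|^{\mu/2}$ and using Cauchy--Schwarz/H\"{o}lder) does work, but it needs one application of Minkowski's mixed-norm inequality to reorder the indices, and that step is valid precisely because $\lambda\leq2$.

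This brings out the genuine gap, which sits in Step 1 and is flagged by a sign error: the hypothesis $\frac{1}{p}+\frac{1}{q}\leq\frac{1}{2}$ gives $\frac{1}{\lambda}=1-\left(\frac{1}{p}+\frac{1}{q}\right)\geq\frac{1}{2}$, i.e.\ $\lambda\leq2$, \emph{not} $\lambda\geq2$ (at $p=q=\infty$ one has $\lambda=1$; at $p=q=4$, $\lambda=2$). Consequently the mechanism you assign to the hypothesis --- that $\lambda\geq2$ is ``exactly what is needed'' for Minkowski to commute the outer $\ell_{\lambda}$-norm past the Rademacher expectation --- is based on a false inequality; pulling an expectation out of an $\ell_{\lambda}$-norm is just the triangle inequality in $\ell_\lambda$ and needs only $\lambda\geq1$, while the place $\lambda\leq2$ is genuinely used is the reordering in Step 2 described above. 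More seriously, the crux of Step 1 is precisely the part you acknowledge as ``the main obstacle'' and then assert away: choosing weighted test vectors after Khinchine whose $\ell_{p}$- and $\ell_{q}$-norms are simultaneously at most $1$ with no dimensional loss. A single $\ell_{q}$-normalised weight in the second slot, tuned against the dual vector $c$, does not suffice: after H\"{o}lder one must distribute the full deficit $\frac{1}{\lambda}=1-\frac{1}{p}-\frac{1}{q}$ over \emph{both} indices, and in the known proofs this requires either a double weighting in both slots with an inductive bookkeeping (as in \cite{pra} and \cite{abps2}), or a reduction to a linear Bennett--Carl-type result for the operator $\ell_{p}\rightarrow\ell_{q^{\ast}}$ built from the rows $A(e_{j_{1}},\cdot)$, exploiting $q^{\ast}\leq2$ so that the inner $\ell_{2}$-norm is dominated by the $\ell_{q^{\ast}}$-norm. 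As written, Step 1 is a plan whose decisive computation is missing, so the proposal does not yet constitute a proof of (\ref{hl1a}), and hence not of (\ref{hl1c}) either.
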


It is well known that the exponents $\lambda $ and $\mu $ are optimal. Also,
in (\ref{hl1a}) the positions of the exponents $2$ and $\lambda $ can be
interchanged. Furthermore, $2$ and $\lambda $ can be replaced by $a,b\in
\lbrack \lambda ,2]$ provided that
\begin{equation*}
\frac{1}{a}+\frac{1}{b}\leq \frac{3}{2}-\left( \frac{1}{p}+\frac{1}{q}%
\right) .
\end{equation*}

\begin{theorem}
\label{hl2}(See Hardy and Littlewood \cite[Theorem 2]{hardy}) Let $p,q\in
\lbrack 2,\infty ]$, with $\frac{1}{2}<\frac{1}{p}+\frac{1}{q}<1$. There is
a constant $C_{p,q}\geq 1$ such that%
\begin{equation}
\left( \sum_{j_{1}=1}^{\infty }\left( \sum_{j_{2}=1}^{\infty }\left\vert
A(e_{j_{1}},e_{j_{2}})\right\vert ^{2}\right) ^{\frac{\lambda }{2}}\right) ^{%
\frac{1}{\lambda }}\leq C_{p,q}\left\Vert A\right\Vert ,  \label{hl2a}
\end{equation}%
and%
\begin{equation}
\left( \sum_{j_{1},j_{2}=1}^{\infty }\left\vert
A(e_{j_{1}},e_{j_{2}})\right\vert ^{\lambda }\right) ^{\frac{1}{\lambda }%
}\leq C_{p,q}\left\Vert A\right\Vert ,  \label{hl2c}
\end{equation}%
for all continuous bilinear forms $A:\ell _{p}\times \ell _{q}\rightarrow
\mathbb{K}$.
\end{theorem}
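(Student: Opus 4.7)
The plan is to establish the anisotropic inequality (\ref{hl2a}) first and then derive the isotropic inequality (\ref{hl2c}) as a direct corollary.

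\textbf{Deduction of (\ref{hl2c}) from (\ref{hl2a}).} In the regime $\frac{1}{2}<\frac{1}{p}+\frac{1}{q}<1$, the exponent $\lambda=1/(1-\frac{1}{p}-\frac{1}{q})$ satisfies $\lambda>2$, so for each fixed $j_{1}$ the sequence inclusion $\ell^{2}\subset\ell^{\lambda}$ gives
$$\Big(\sum_{j_{2}}|A(e_{j_{1}},e_{j_{2}})|^{\lambda}\Big)^{1/\lambda}\leq\Big(\sum_{j_{2}}|A(e_{j_{1}},e_{j_{2}})|^{2}\Big)^{1/2}.$$
Raising both sides to the $\lambda$-th power and summing over $j_{1}$ transforms (\ref{hl2a}) into (\ref{hl2c}).

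\textbf{Proof of (\ref{hl2a}).} Identify $A$ with the linear operator $T_{A}:\ell_{p}\to\ell_{q^{*}}$ defined by $T_{A}x=A(x,\cdot)$, satisfying $\|T_{A}\|=\|A\|$. Since $q\geq 2$, the codomain $\ell_{q^{*}}$ has cotype $2$ and embeds continuously into $\ell_{2}$; in particular the trivial bound $\|T_{A}e_{j_{1}}\|_{\ell_{2}}\leq\|T_{A}e_{j_{1}}\|_{\ell_{q^{*}}}\leq\|A\|$ holds for every $j_{1}$.

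The core analytic input is a Kahane/Khintchine averaging argument. For Rademacher signs $(\epsilon_{j_{1}})_{j_{1}\leq N}$, the vector $\sum_{j_{1}\leq N}\epsilon_{j_{1}}e_{j_{1}}$ has $\ell_{p}$-norm equal to $N^{1/p}$, and hence $\|T_{A}(\sum\epsilon_{j_{1}}e_{j_{1}})\|_{\ell_{q^{*}}}\leq\|A\|N^{1/p}$ for every sign pattern. Raising to the $q^{*}$-th power, averaging in $\epsilon$, and applying Khintchine's inequality in $j_{1}$ to each $j_{2}$-slice produces the finite-$N$ scaling bound
$$\Big(\sum_{j_{2}}\Big(\sum_{j_{1}\leq N}|A(e_{j_{1}},e_{j_{2}})|^{2}\Big)^{q^{*}/2}\Big)^{1/q^{*}}\leq C\|A\|N^{1/p}.$$
A symmetric argument applied to the transpose $T_{A}^{*}:\ell_{q}\to\ell_{p^{*}}$ yields a companion bound in which the roles of $p,q$ and $j_{1},j_{2}$ are exchanged; Minkowski's inequality lets one extract from these bounds estimates on the mixed-norm quantities that appear in (\ref{hl2a}).

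The final step, and the main obstacle, is to convert these finite-$N$ scaling estimates into an $N$-independent inequality with precisely the exponent $\lambda=pq/(pq-p-q)$. A purely one-sided Kahane argument gives only the suboptimal exponent $2p/(p-2)$, so the two sides must be used together. The cleanest completion is Riesz--Thorin complex interpolation between the Theorem \ref{hl1} endpoint $\frac{1}{p}+\frac{1}{q}=\frac{1}{2}$ (where $\lambda=2$ and the inequality is already known) and the degenerate endpoint $\frac{1}{p}+\frac{1}{q}\to 1^{-}$ (where $\lambda\to\infty$ matches the trivial pointwise bound $\|T_{A}e_{j_{1}}\|_{\ell_{2}}\leq\|A\|$). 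The constraint $\frac{1}{2}<\frac{1}{p}+\frac{1}{q}<1$ of Theorem \ref{hl2} is precisely the range in which this interpolation produces the correct $\lambda$, and the target exponent then arises from the standard interpolation formula.
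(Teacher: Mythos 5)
Your two preliminary steps are sound: in the regime $\frac{1}{2}<\frac{1}{p}+\frac{1}{q}<1$ one indeed has $\lambda>2$, so the inclusion $\ell_{2}\subset\ell_{\lambda}$ in the inner index correctly reduces \eqref{hl2c} to \eqref{hl2a}; and the two Khintchine averaging estimates, with growth factors $N^{1/p}$ and $N^{1/q}$ respectively, are correctly derived. (Note that the paper itself offers no proof to compare against: Theorem \ref{hl2} is quoted from \cite[Theorem 2]{hardy}, so your argument has to stand entirely on its own.) It does not, because the capstone step fails.

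The gap is the passage from the $N$-dependent scaling bounds to the $N$-free inequality \eqref{hl2a}, which you delegate to ``Riesz--Thorin between the Theorem \ref{hl1} endpoint and the degenerate endpoint.'' Riesz--Thorin interpolates a \emph{fixed} operator that admits two endpoint estimates, and here there is no such object: the given form $A$ is bounded only on the one space $\ell_{p}\times\ell_{q}$, and since $\ell_{p}\subsetneq\ell_{p_{0}}$ for $p<p_{0}$, it has no bounded extension to any pair $(p_{0},q_{0})$ with $\frac{1}{p_{0}}+\frac{1}{q_{0}}=\frac{1}{2}$, so the Theorem \ref{hl1} endpoint estimate is simply unavailable for $A$; the other ``endpoint'' $\frac{1}{p}+\frac{1}{q}\to 1^{-}$ is a limit of parameters, not an inequality at an admissible parameter, and one cannot interpolate against a limit. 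What your plan would actually require is interpolation of the whole scale of spaces of bilinear forms $\mathcal{B}(\ell_{p_{\theta}}\times\ell_{q_{\theta}})$ (equivalently, of injective tensor products), a Kouba-type theorem with restrictive hypotheses that is precisely the nontrivial content of the interpolation approach of Osikiewicz and Tonge \cite{tonge} --- it is not a consequence of Riesz--Thorin. Nor can ``Minkowski's inequality'' rescue the finite-$N$ bounds: they are sharp as stated (test the truncated diagonal form $A_{N}(x,y)=\sum_{j\leq N}x_{j}y_{j}$, for which $\|A_{N}\|\approx N^{1/\lambda}$ and, e.g., the transposed bound $\bigl(\sum_{j_{1}}\bigl(\sum_{j_{2}\leq N}|a_{j_{1}j_{2}}|^{2}\bigr)^{p^{\ast}/2}\bigr)^{1/p^{\ast}}\leq C\|A\|N^{1/q}$ holds with equality up to constants, since $\frac{1}{\lambda}+\frac{1}{q}=\frac{1}{p^{\ast}}$), and any H\"older/Minkowski combination of them with the trivial row bound $\sup_{j_{1}}\|A(e_{j_{1}},\cdot)\|_{\ell_{q^{\ast}}}\leq\|A\|$ returns only $N$-dependent estimates. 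Removing the $N$ and landing exactly at $\lambda=pq/(pq-p-q)$ needs a genuinely different mechanism --- Hardy and Littlewood's original elementary argument, a dyadic blocking of the rows by the size of $\|A(e_{j_{1}},\cdot)\|_{2}$, or legitimate tensor-norm interpolation --- none of which your proposal supplies. As written, you have proved the auxiliary scaling estimates and the implication \eqref{hl2a}$\Rightarrow$\eqref{hl2c}, but not \eqref{hl2a} itself.
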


The exponent $\lambda $ above is also optimal. However, contrary to what
happens in Theorem \ref{hl1}, now, in (\ref{hl2a}) the exponents $2$ and $%
\lambda $ cannot be interchanged (see \cite{was}).

\begin{theorem}
\label{hl3}(See Hardy and Littlewood \cite[Theorem 3]{hardy}) Let $1<q<2<p$,
with $\frac{1}{p}+\frac{1}{q}<1$. There is a constant $C_{p,q}\geq 1$ such
that%
\begin{equation}
\left( \sum_{j_{1},j_{2}=1}^{\infty }\left\vert
A(e_{j_{1}},e_{j_{2}})\right\vert ^{\lambda }\right) ^{\frac{1}{\lambda }%
}\leq C_{p,q}\left\Vert A\right\Vert ,  \label{hl3c}
\end{equation}%
for all continuous bilinear forms $A:\ell _{p}\times \ell _{q}\rightarrow
\mathbb{K}$.
\end{theorem}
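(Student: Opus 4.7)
\medskip
\noindent\textbf{Proof plan.} The hypothesis $1<q<2<p$ with $\frac{1}{p}+\frac{1}{q}<1$ is not covered by Theorems~\ref{hl1} or~\ref{hl2}, which both require $p,q\ge 2$. My strategy is to exploit the condition $q<2$ through a Rademacher/Khintchine argument in the $q$-variable and then to combine the resulting estimates via a H\"older-type interpolation to obtain the diagonal $\ell_\lambda$-bound.

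\smallskip\noindent\textbf{Step 1 (Reduction).} By a standard truncation and monotone convergence it suffices to prove the inequality for bilinear forms on $\ell_p^n\times\ell_q^n$ with a constant independent of $n$. Write $a_{ij}:=A(e_i,e_j)$.

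\smallskip\noindent\textbf{Step 2 (Rademacher averaging using $q<2$).} For Rademacher signs $(\varepsilon_j)$ and nonnegative weights $w=(w_j)$ with $\sum_j w_j^q=1$, the vector $y_\varepsilon:=\sum_j\varepsilon_j w_j e_j$ lies in the unit ball of $\ell_q^n$, so
\[
\left(\sum_i\Bigl|\sum_j\varepsilon_j w_j a_{ij}\Bigr|^{p^{*}}\right)^{1/p^{*}}=\|A(\cdot,y_\varepsilon)\|_{\ell_{p^{*}}}\le\|A\|.
\]
Taking the $L^{p^{*}}$-norm in $\varepsilon$ and applying Khintchine's inequality in the $j$-variable yields the weighted estimate
\[
\left(\sum_i\Bigl(\sum_j w_j^{2}|a_{ij}|^{2}\Bigr)^{p^{*}/2}\right)^{1/p^{*}}\le C_{p,q}\,\|A\|,
\]
holding \emph{uniformly} over admissible weights $w$.

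\smallskip\noindent\textbf{Step 3 (Extraction of the $\ell_\lambda$-bound).} The weighted inequality of Step~2, together with its symmetric counterpart in the $p$-variable (where $p>2$ is exploited by the same averaging argument), is combined via a H\"older/Blei-type interpolation, possibly together with the trivial entrywise bound $|a_{ij}|\le\|A\|$. The balance of H\"older exponents is tuned so that the resulting diagonal estimate has no residual $n$-dependence and produces exactly the exponent $\lambda=\frac{pq}{pq-p-q}$, equivalently $\frac{1}{\lambda}=1-\frac{1}{p}-\frac{1}{q}$.

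\smallskip\noindent\textbf{Step 4 (Passage to the limit).} The constant in Step~3 depends only on $p,q$. Letting $n\to\infty$ completes the proof.

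\smallskip\noindent\textbf{Main obstacle.} The technical heart is Step~3. A uniform weight choice $w_j=n^{-1/q}$ in Step~2 yields a bound that carries an unwanted factor $n^{1/q}$, and the analogous $p$-variable estimate a factor $n^{1/p}$; a naive two-way Blei interpolation between the two mixed-norm estimates leaves a residual $n^{1/(2p)+1/(2q)}$. To obtain an $n$-independent bound one must either (a) exploit the full uniformity over the weight $w$ in Step~2 by converting it via duality/factorization (of Maurey--Pisier type) into a genuine $r$-summing estimate for the associated operator $\ell_q\to\ell_{p^{*}}$, or (b) weave the trivial endpoint $\|a_{ij}\|_\infty\le\|A\|$ into a three-term H\"older with suitably calibrated weights. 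Either route forces the output exponent to be exactly $\lambda$, reflecting the optimality of this value in Theorem~\ref{hl3}.
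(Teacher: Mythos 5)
Your Steps 1--2 are correct (the weighted Khintchine estimate holds uniformly in $w$), but Step 3 --- which is the entire analytic content of the theorem --- is not carried out: you assert that the H\"older exponents ``can be tuned'' and then, in your own ``Main obstacle,'' concede that every concrete version you tried leaves a residual power of $n$ (e.g.\ $n^{1/(2p)+1/(2q)}$ from the two-way Blei interpolation). That residual is real: with uniform weights, or with any interpolation of the two mixed-norm Khintchine estimates against the entrywise bound alone, the diagonal form $A_n(x,y)=\sum_{j\le n}x_jy_j$ shows the exponents cannot balance, so neither rebalancing nor adding the $\ell_\infty$ endpoint closes the gap. As written, the proposal proves only the weighted estimate, not (\ref{hl3c}). (For calibration: the paper itself does not prove this statement --- it quotes it from Hardy--Littlewood --- but it is subsumed by Theorem~\ref{t1} and by the paper's Theorem~\ref{t1multi} and Corollary~\ref{555666} with $m=2$.)

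The fix is exactly your route (a), made precise, and it is what the paper's machinery does in the proof of Theorem~\ref{t1multi}: do the averaging in the \emph{$p$-variable}, and keep the $j_2$-sum as a norm in $\ell_{q^{\ast}}$ instead of collapsing it to a weighted $\ell_2$ by scalar Khintchine. Concretely, regard $A$ as the linear operator $\hat{A}:\ell_p\rightarrow\ell_{q^{\ast}}$, $\hat{A}x=\left(A(x,e_{j})\right)_{j}$, which satisfies $\Vert\hat{A}\Vert\leq\Vert A\Vert$ by duality. Since $1<q<2$ we have $q^{\ast}>2$, so $\ell_{q^{\ast}}$ has cotype $q^{\ast}$ --- this is precisely where $q<2$ enters --- and the hypothesis $\frac{1}{p}+\frac{1}{q}<1$ is exactly $p>q^{\ast}$. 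Now run the $m=1$ case of Theorem~\ref{661} with $r=q^{\ast}$: choose the \emph{self-referential} weights $w_{i}=\Vert\hat{A}e_{i}\Vert^{q^{\ast}/(p-q^{\ast})}$ (rather than keeping uniformity over all admissible $w$), apply the cotype inequality to $\sum_i r_i(t)w_i\hat{A}e_i$, dualize, use H\"older with exponents $p,p^{\ast}$, and absorb the weighted sum into the left-hand side. This yields the stronger mixed-norm inequality
\begin{equation*}
\left( \sum_{j_{1}=1}^{\infty }\left( \sum_{j_{2}=1}^{\infty }\left\vert A(e_{j_{1}},e_{j_{2}})\right\vert ^{q^{\ast }}\right) ^{\frac{\lambda }{q^{\ast }}}\right) ^{\frac{1}{\lambda }}\leq C_{p,q}\left\Vert A\right\Vert ,
\qquad \frac{1}{\lambda }=\frac{1}{q^{\ast }}-\frac{1}{p}=1-\frac{1}{p}-\frac{1}{q},
\end{equation*}
and since $q^{\ast }\leq \lambda $ the inclusion $\ell _{q^{\ast }}\hookrightarrow \ell _{\lambda }$ in the inner sum gives (\ref{hl3c}). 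Note that the self-normalizing weight choice is what eliminates the $n$-dependence that blocked your Step 3: the quantity being estimated appears on both sides with exponents $\frac{1}{r}$ and $\frac{1}{p}$, and $\frac{1}{r}-\frac{1}{p}=\frac{1}{\lambda}>0$ lets you divide it out.
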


The \textquotedblleft optimal\textquotedblright\ exponent in (\ref{hl3c})
was improved in \cite{tonge}:

\begin{theorem}
\label{t1}(See Osikiewicz and Tonge \cite{tonge}) Let $1<q\leq 2<p$, with $%
\frac{1}{p}+\frac{1}{q}<1$. If $A:\ell _{p}\times \ell _{q}\rightarrow
\mathbb{K}$ is a continuous bilinear form, then
\begin{equation}
\left( \sum_{j_{1}=1}^{\infty }\left( \sum_{j_{2}=1}^{\infty }\left\vert
A(e_{j_{1}},e_{j_{2}})\right\vert ^{q^{\ast }}\right) ^{\frac{\lambda }{%
q^{\ast }}}\right) ^{\frac{1}{\lambda }}\leq \left\Vert A\right\Vert .
\label{t1b}
\end{equation}
\end{theorem}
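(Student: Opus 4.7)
My plan is to recast the bilinear inequality as a linear operator inequality, and then exploit that $\ell_{q^{\ast}}$ has cotype $q^{\ast}$ with constant exactly $1$ (which holds because $q^{\ast}\geq 2$). Set $a_{ij}:=A(e_i,e_j)$. Since the inner sum $\bigl(\sum_j |a_{ij}|^{q^{\ast}}\bigr)^{1/q^{\ast}}$ is exactly the norm of the row $(a_{ij})_j$ in $\ell_{q^{\ast}}\simeq \ell_q^{\,\ast}$, the operator $T:\ell_p\to \ell_{q^{\ast}}$ defined by $Te_i=(a_{ij})_j$ satisfies $\|T\|=\|A\|$, and inequality (\ref{t1b}) becomes the linear estimate
\[
\bigl(\sum_{i=1}^{\infty} \|Te_i\|_{q^{\ast}}^{\lambda}\bigr)^{1/\lambda}\leq \|T\|.
\]

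For the key step I would use the cotype-$q^{\ast}$ inequality for $\ell_{q^{\ast}}$ with constant $1$, which follows from applying Jensen's inequality coordinatewise (so $\mathbb{E}|\sum_i\epsilon_i z_i|^{q^{\ast}}\geq (\sum_i |z_i|^2)^{q^{\ast}/2}$) together with the monotonicity of $\ell_r$ norms on scalar sequences (so $(\sum_i |z_i|^2)^{q^{\ast}/2}\geq \sum_i |z_i|^{q^{\ast}}$): for any finite family $(y_i)$ in $\ell_{q^{\ast}}$ and Rademacher variables $\epsilon_i$,
\[
\bigl(\sum_i \|y_i\|_{q^{\ast}}^{q^{\ast}}\bigr)^{1/q^{\ast}}\leq \bigl(\mathbb{E}_{\epsilon}\,\bigl\|\sum_i \epsilon_i y_i\bigr\|_{q^{\ast}}^{q^{\ast}}\bigr)^{1/q^{\ast}}.
\]
Applying this with $y_i=\alpha_i T e_i$ for any scalar weight sequence $(\alpha_i)$, and noting that signs preserve the $\ell_p$ norm so that $\mathbb{E}_{\epsilon}\|T(\sum_i \epsilon_i\alpha_i e_i)\|_{q^{\ast}}^{q^{\ast}}\leq \|T\|^{q^{\ast}}\|\alpha\|_p^{q^{\ast}}$, I obtain the weighted intermediate inequality
\[
\bigl(\sum_i \alpha_i^{q^{\ast}}\|Te_i\|_{q^{\ast}}^{q^{\ast}}\bigr)^{1/q^{\ast}}\leq \|T\|\,\|\alpha\|_p.
\]

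To conclude, I would test against the extremizing weight $\alpha_i:=\|Te_i\|_{q^{\ast}}^{\lambda/p}$. The algebraic identity $\tfrac{1}{\lambda}=\tfrac{1}{q^{\ast}}-\tfrac{1}{p}$, equivalently $\lambda=pq^{\ast}/(p-q^{\ast})$, is precisely what forces both $\sum_i \alpha_i^{q^{\ast}}\|Te_i\|_{q^{\ast}}^{q^{\ast}}=\sum_i \|Te_i\|_{q^{\ast}}^{\lambda}$ and $\|\alpha\|_p=(\sum_i \|Te_i\|_{q^{\ast}}^{\lambda})^{1/p}$; plugging these in and simplifying the exponents yields the desired bound $(\sum_i \|Te_i\|_{q^{\ast}}^{\lambda})^{1/\lambda}\leq \|T\|=\|A\|$. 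The main obstacle is keeping the constant equal to $1$ throughout; this rests entirely on the sharp cotype-$q^{\ast}$ constant of $\ell_{q^{\ast}}$ being exactly $1$ (rather than merely finite), and that is precisely what distinguishes Theorem \ref{t1} from Theorem \ref{hl3} --- not any subtlety in the Hölder/duality arithmetic at the end.
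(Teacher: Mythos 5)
Your proof is correct, but it is not the paper's route: the paper offers no proof of Theorem \ref{t1} at all, citing instead Osikiewicz and Tonge \cite{tonge}, whose original argument is by interpolation; and when the paper later recovers this statement (with constant $1$) as the $m=2$ case of Theorem \ref{t1multi}, it does so by factoring $A$ through the operator $x\mapsto (A(x,e_j))_j\in\ell_{q^{\ast}}$ --- your first step exactly --- but then invoking the vector-valued Theorem \ref{661} with $Y=\ell_{q^{\ast}}$, whose $m=1$ case runs the cotype argument at $s=2$ (bounding the $L^2$ Rademacher average by a supremum and then using H\"older/duality), so that the constant comes out as $C_{q^{\ast},2}(\ell_{q^{\ast}})$; making that constant equal to $1$ then requires the whole Rademacher-matrix/complex-interpolation machinery of Section 4 (Corollary \ref{cotlp}, after \cite{kato}). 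Your argument short-circuits all of this by running the cotype inequality at $s=q^{\ast}$, where $C_{q^{\ast},q^{\ast}}(\ell_{q^{\ast}})=1$ is elementary (Jensen coordinatewise plus monotonicity of $\ell_r$-norms, valid for real or complex scalars since $\mathbb{E}\,\vert\sum_i\epsilon_i z_i\vert^2=\sum_i\vert z_i\vert^2$), and by exploiting that signs preserve the $\ell_p$-norm of $\sum_i\epsilon_i\alpha_i e_i$, so no Khinchine constant, no duality step, and no interpolation ever enters. The normalization weight $\alpha_i=\Vert Te_i\Vert^{\lambda/p}=\Vert Te_i\Vert^{q^{\ast}/(p-q^{\ast})}$ is the same trick the paper uses in the $m=1$ case of Theorem \ref{661}, and your exponent arithmetic ($1/\lambda=1/q^{\ast}-1/p$, hence $q^{\ast}(\lambda/p+1)=\lambda$ and $\alpha_i^{p}=\Vert Te_i\Vert^{\lambda}$) checks out, as does $\Vert T\Vert=\Vert A\Vert$ via density of finitely supported vectors in $\ell_q$ ($q<\infty$). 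What each approach buys: the interpolation schemes (both \cite{tonge}'s and Section 4's) generalize to mixed $\ell_{\mathbf{p}}$-valued targets and arbitrary $m$, which is what the paper needs for Theorem \ref{t1multi}; your argument is shorter and fully self-contained for the bilinear case, but to extend it to $m\geq 3$ one would still need sharp cotype constants for the iterated spaces $\ell_{\mathbf{p}}$, which is precisely where the paper's interpolation section becomes unavoidable. One small presentational point: state explicitly that the weighted inequality is applied to $i=1,\dots,n$ and the resulting uniform bound $S_n^{1/\lambda}\leq\Vert T\Vert$ is passed to the limit.
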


Hardy--Littlewood type inequalities were extensively investigated in recent
years, but despite much progress there are still several open questions
concerning the optimality of exponents and constants.

\bigskip One of the main nuances on the optimality of exponents that
apparently has been overlooked in the past is that results of optimality of
exponents for expressions like%
\begin{equation*}
\left( \sum_{j_{1},...,j_{m}=1}^{\infty }\left\vert
A(e_{j_{1}},...e_{j_{m}})\right\vert ^{s}\right) ^{\frac{1}{s}}\leq
C\left\Vert A\right\Vert
\end{equation*}%
are in some sense sub-optimal. The main point is that the above inequality
can be viewed as%
\begin{equation}
\left( \sum_{j_{1}=1}^{\infty }...\left( \sum_{j_{m-1}=1}^{\infty }\left(
\sum_{j_{m}=1}^{\infty }\left\vert A(e_{j_{1}},...e_{j_{m}})\right\vert
^{s_{m}}\right) ^{\frac{1}{s_{m}}s_{m-1}}\right) ^{\frac{1}{s_{m-1}}%
}...\right) ^{\frac{1}{s_{1}}}\leq C\left\Vert A\right\Vert  \label{987}
\end{equation}%
for $s_{1}=...=s_{m}=s$, and this is the way that the optimality of the
exponents can be investigated with more accuracy. A simple illustration of
this fact is that the exponent $\lambda $ of (\ref{hl2c}) is optimal, but a
quick look at (\ref{hl2a}) shows that the optimality of (\ref{hl2c}) is just
apparent. An extensive investigation of the Hardy--Littlewood inequalities
in light of multiple sums like (\ref{987}) was initiated in \cite{abps2,
aaa, anss}, but there are still some subtle issues not encompassed by
previous work. One of the main technical obstacles is to develop methods to
find optimal exponents in situations in which the optimal exponents of each
sum cannot be interchanged. This is the case of our first main result (for
definition of cotype, see the next section):

\begin{teoremavectorial}
\bigskip \label{7788}(See Theorem \ref{661}, below) Let $q_{1},...,q_{m}>0$
and $Y$ be an infinite-dimensional Banach space attaining its cotype $\cot
Y. $ If
\begin{equation*}
\frac{1}{p_{1}}+...+\frac{1}{p_{m}}<\frac{1}{\cot Y},
\end{equation*}%
then the following assertions are equivalent:

(a) There is a constant $C_{p_{1},...,p_{m}}^{Y}\geq 1$ such that%
\begin{equation*}
\left( \sum_{j_{1}=1}^{\infty }\left( \sum_{j_{2}=1}^{\infty }\cdots \left(
\sum_{j_{m}=1}^{\infty }\left\Vert A(e_{j_{1}},...,e_{j_{m}})\right\Vert
^{q_{m}}\right) ^{\frac{q_{m-1}}{q_{m}}}\cdots \right) ^{\frac{q_{1}}{q_{2}}%
}\right) ^{\frac{1}{q_{1}}}\leq C_{p_{1},...,p_{m}}^{Y}\left\Vert
A\right\Vert
\end{equation*}%
for all continuous $m$-linear operators $A:\ell _{p_{1}}\times \cdots \times
\ell _{p_{m}}\rightarrow Y.$

(b) The exponents $q_{1},...,q_{m}$ satisfy
\begin{equation*}
q_{1}\geq \lambda _{m,\cot Y}^{p_{1},...,p_{m}},\text{ }q_{2}\geq \lambda
_{m-1,\cot Y}^{p_{2},...,p_{m}},...,\text{ }q_{m-1}\geq \lambda _{2,\cot
Y}^{p_{m-1},p_{m}},\text{ }q_{m}\geq \lambda _{1,\cot Y}^{p_{m}},
\end{equation*}%
where, for $k=1,...,m,$
\begin{equation*}
\lambda _{m-k+1,\cot Y}^{p_{k},...,p_{m}}:=\frac{\cot Y}{1-\left( \frac{1}{%
p_{k}}+...+\frac{1}{p_{m}}\right) \cot Y}.
\end{equation*}
\end{teoremavectorial}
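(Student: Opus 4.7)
The plan is to prove the two implications separately, with the main tool being the classical linear cotype inequality: if $Z$ is a Banach space of cotype $q$ and $T\colon \ell_{p}\to Z$ is bounded with $1/p<1/q$, then
\[
\left(\sum_{j=1}^{\infty}\|Te_{j}\|^{r}\right)^{1/r}\leq C\,\|T\|,\qquad r=\frac{q}{1-q/p}.
\]
The direction $(b)\Rightarrow(a)$ iterates this statement from the inside out, while $(a)\Rightarrow(b)$ reduces the optimality of each $q_{k}$ to the optimality of the \emph{first} exponent in a shorter multilinear inequality, ultimately a delicate Kahane--Salem--Zygmund-type lower bound using that $Y$ attains its cotype.

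For $(b)\Rightarrow(a)$, I would argue by induction on $m$. The base case $m=1$ follows from the displayed linear inequality with $Z=Y$, $p=p_{m}$, $q=\cot Y$, yielding precisely $r=\lambda_{1,\cot Y}^{p_{m}}$. For the inductive step, apply the linear inequality in the final variable of $A$: for each fixed $x_{1},\ldots,x_{m-1}$,
\[
\left(\sum_{j_{m}=1}^{\infty}\|A(x_{1},\ldots,x_{m-1},e_{j_{m}})\|^{q_{m}}\right)^{1/q_{m}}\leq C\,\|A\|\,\|x_{1}\|\cdots\|x_{m-1}\|,
\]
which says that $\bar{A}(x_{1},\ldots,x_{m-1}):=(A(x_{1},\ldots,x_{m-1},e_{j_{m}}))_{j_{m}\in\mathbb{N}}$ is a bounded $(m-1)$-linear operator from $\ell_{p_{1}}\times\cdots\times\ell_{p_{m-1}}$ into $\ell_{q_{m}}(Y)$. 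Since $q_{m}\geq\cot Y$, the target $\ell_{q_{m}}(Y)$ has cotype $q_{m}$. Applying the induction hypothesis to $\bar{A}$ with $q_{m}$ in place of $\cot Y$ yields the desired mixed-norm bound for $q_{1},\ldots,q_{m-1}$, once one verifies the algebraic identity
\[
\lambda_{m-k,\,q_{m}}^{p_{k},\ldots,p_{m-1}}=\lambda_{m-k+1,\,\cot Y}^{p_{k},\ldots,p_{m}},
\]
which is a direct computation from the closed-form definition of $\lambda$ after substituting $q_{m}=\cot Y/(1-\cot Y/p_{m})$.

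For $(a)\Rightarrow(b)$, I would establish the necessity of each bound on $q_{k}$ by reducing to a shorter multilinear inequality. Given any $(m-k+1)$-linear operator $B\colon\ell_{p_{k}}\times\cdots\times\ell_{p_{m}}\to Y$, form the $m$-linear operator
\[
A(x_{1},\ldots,x_{m}):=\langle e_{1}^{*},x_{1}\rangle\cdots\langle e_{1}^{*},x_{k-1}\rangle\,B(x_{k},\ldots,x_{m}),
\]
whose norm equals $\|B\|$. Substituting canonical vectors into inequality (a) makes the outermost $k-1$ sums collapse to the single contribution at $j_{1}=\cdots=j_{k-1}=1$, leaving exactly the $(m-k+1)$-linear analogue of (a) for $B$ with exponents $(q_{k},\ldots,q_{m})$ and leading exponent $q_{k}$. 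Thus the problem reduces to showing that in an $n$-linear mixed-norm inequality the first exponent must satisfy $q_{1}\geq\lambda_{n,\cot Y}^{p_{1},\ldots,p_{n}}$, for each $n\in\{1,\ldots,m\}$.

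The main obstacle is this leading-exponent lower bound: it requires producing, for each large $N$, an $n$-linear operator $\ell_{p_{1}}^{N}\times\cdots\times\ell_{p_{n}}^{N}\to Y$ whose norm is controlled while its coefficient array is asymptotically large in the correct sense. In the scalar case such operators are furnished by Kahane--Salem--Zygmund random polynomials, and the vector-valued case is obtained by twisting such a scalar form with vectors in $Y$ witnessing the cotype inequality asymptotically --- this is precisely where the hypothesis that $Y$ \emph{attains} its cotype enters. Once this sharp $n$-linear lower bound is established, the rest of the argument is algebraic bookkeeping combining the reduction above with the identities between the various $\lambda$ exponents.
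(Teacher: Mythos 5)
Your $(b)\Rightarrow(a)$ half is essentially the paper's own proof: your $\bar{A}$ is the paper's operator $A_{n,e}$ with values in $\ell_{\lambda_{1,\cot Y}^{p_m}}(Y)$, the cotype of the vector-valued sequence space and the algebraic identity $\lambda_{m-k,R}^{p_k,\ldots,p_{m-1}}=\lambda_{m-k+1,\cot Y}^{p_k,\ldots,p_m}$ are exactly as in Theorem \ref{661}, and your reduction of necessity of each $q_k$ to a leading-exponent bound via padding with $\langle e_1^{*},x\rangle$ factors is the paper's Lemma \ref{propo}. One bookkeeping point you should make explicit: the induction hypothesis needs $\frac{1}{p_1}+\cdots+\frac{1}{p_{m-1}}<\frac{1}{q_m}$, which can fail if $q_m>\lambda_{1,\cot Y}^{p_m}$; you must first replace $q_m$ by its minimal value $\lambda_{1,\cot Y}^{p_m}$, which is legitimate because the mixed norm decreases when an inner exponent increases (this is the monotonicity remark closing the paper's proof).

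The genuine gap is the crux of $(a)\Rightarrow(b)$: your proposed lower-bound construction is the wrong tool, on two counts. First, Kahane--Salem--Zygmund random forms have full coefficient arrays; their norm estimates yield constraints of the aggregate form $\sum_{k}\frac{1}{q_k}\leq\frac{m+1}{2}-\sum_k\frac{1}{p_k}$, not the individual bound $q_1\geq\lambda_{m,\cot Y}^{p_1,\ldots,p_m}$ needed here. The correct witness is deterministic and diagonal: by the Maurey--Pisier theorem, any infinite-dimensional $Y$ finitely factors $\ell_{\cot Y}\hookrightarrow\ell_\infty$, giving vectors $z_1,\ldots,z_n\in Y$ with $C_1\|(a_j)\|_\infty\leq\|\sum_j a_j z_j\|\leq C_2(\sum_j|a_j|^{r})^{1/r}$ for $r=\cot Y$; then $A_n(x^{(1)},\ldots,x^{(m)})=\sum_{j=1}^{n}x_j^{(1)}\cdots x_j^{(m)}z_j$ satisfies $\|A_n\|\leq C_2 n^{1/\lambda_{m,r}^{p_1,\ldots,p_m}}$ by H\"older, while the mixed sum collapses to the diagonal and is at least $C_1 n^{1/q_1}$, forcing $q_1\geq\lambda_{m,r}^{p_1,\ldots,p_m}$. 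Your phrase ``vectors witnessing the cotype inequality asymptotically'' cannot substitute for this: near-equality in the cotype inequality gives a \emph{lower} estimate on $(\sum\|x_j\|^{r})^{1/r}$ against Rademacher averages, whereas bounding $\|A_n\|$ requires the \emph{upper} $\ell_r$-estimate $\|\sum_j a_j z_j\|\leq C_2\|(a_j)\|_r$, which is precisely what the Maurey--Pisier factorization supplies and what your sketch never produces. Second, you have the role of the attainment hypothesis backwards: the Maurey--Pisier factorization holds for $r=\cot Y$ the \emph{infimum} of cotypes, attained or not, so the necessity direction needs no attainment; attainment is used in $(b)\Rightarrow(a)$, where the cotype inequality with exponent exactly $\cot Y$ is invoked (your own base case already uses it). This is confirmed by the paper's Theorem \ref{662}, where without attainment the positive direction survives only with an $\varepsilon$-loss while the necessity statement is unchanged.
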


Despite the wide generality of the results of \cite{abps2, aaa, dimant}, the
results of this paper do not follow from the techniques developed in these
earlier papers. We illustrate, by means of a concrete example, how the above
Theorem provides more precise information than previously known results.

\begin{example}
Suppose that $m=3,$ $p_{1}=p_{2}=p_{3}=10,$ and $Y=\ell _{3}.$ The above
Theorem implies that there is a universal constant $C\geq 1$ such that%
\begin{equation}
\left( \sum_{j_{1}=1}^{\infty }\left( \sum_{j_{2}=1}^{\infty }\left(
\sum_{j_{3}=1}^{\infty }\left\Vert
A(e_{j_{1}},e_{j_{2}},e_{j_{3}})\right\Vert ^{q_{3}}\right) ^{\frac{q_{2}}{%
q_{3}}}\right) ^{\frac{q_{1}}{q_{2}}}\right) ^{\frac{1}{q_{1}}}\leq
C\left\Vert A\right\Vert  \label{0099}
\end{equation}%
for all continuous $3$-linear forms $A:\ell _{10}\times \ell _{10}\times
\ell _{10}\rightarrow \ell _{3}$ if and only if%
\begin{equation*}
\left\{
\begin{array}{c}
q_{1}\geq 30, \\
q_{2}\geq \text{ }\frac{15}{2}, \\
q_{3}\geq \frac{30}{7},%
\end{array}%
\right.
\end{equation*}%
while the best previously known estimates (from \cite[Proposition 4.3]%
{dimant} and \cite[Theorem 1.5]{abps2}) just give that (\ref{0099}) is valid
for $q_{j}\geq 30$ for all $j=1,2,3$ and that we cannot have simultaneously $%
q_{1}=q_{2}=q_{3}<30.$
\end{example}

Our second main result, stated and proved in Section \ref{s55}, is an
application of this Theorem, generalizing Theorems \ref{hl3} and \ref{t1}
with optimal exponents, to the multilinear setting. In Section \ref{s4} we
show that the optimal constant for the scalar-valued case is precisely $1$,
and finally we remark how our results can be translated to the theory of
multiple summing operators.

\section{Optimal exponents: vector-valued case}

\bigskip Let $2\leq q<\infty $ and $0<s<\infty $. Recall that (see \cite%
{albiac}) a Banach space $X$ has \emph{cotype} $q$ if there is a constant $%
C>0$ such that, no matter how we select finitely many vectors $x_{1},\dots
,x_{n}\in X$,%
\begin{equation}
\left( \sum_{j=1}^{n}\Vert x_{j}\Vert ^{q}\right) ^{\frac{1}{q}}\leq C\left(
\int_{[0,1]}\left\Vert \sum_{j=1}^{n}r_{j}(t)x_{j}\right\Vert ^{s}dt\right)
^{1/s},  \label{99}
\end{equation}%
where $r_{j}$ denotes the $j$-th Rademacher function. It is well known that
if (\ref{99}) is satisfied for a certain $s>0$, then it is satisfied for all
$s>0.$ For a fixed $s$, the smallest of these constants will be denoted by $%
C_{q,s}(X)$ and the infimum of the cotypes of $X$ is denoted by $\cot X$. By
convention we denote $C_{q,2}(X)$ by $C_{q}(X)$.

\bigskip The following simple lemma will be useful.

\begin{lemma}
\label{propo}Let $Y$ be a Banach space, $m\geq 2$, $p_{1},...,p_{m}\in
\lbrack 1,\infty ]$, and $q_{1},...,q_{m},r_{2},...,r_{m}\in (0,\infty )$.
Assume that if
\begin{equation*}
\left( \sum_{j_{2}=1}^{\infty }\left( \sum_{j_{3}=1}^{\infty }\cdots \left(
\sum_{j_{m}=1}^{\infty }\left\Vert A(e_{j_{2}},...,e_{j_{m}})\right\Vert
^{q_{m}}\right) ^{\frac{q_{m-1}}{q_{m}}}\cdots \right) ^{\frac{q_{2}}{q_{3}}%
}\right) ^{\frac{1}{q_{2}}}<\infty
\end{equation*}%
for all continuous $\left( m-1\right) $-linear operators $A:\ell
_{p_{2}}\times \cdots \times \ell _{p_{m}}\rightarrow Y$, then $q_{i}\geq
r_{i}$ for all $i\in \left\{ 2,...,m\right\} .$ Then
\begin{equation*}
\left( \sum_{j_{1}=1}^{\infty }\left( \sum_{j_{2}=1}^{\infty }\cdots \left(
\sum_{j_{m}=1}^{\infty }\left\Vert B(e_{j_{1}},...,e_{j_{m}})\right\Vert
^{q_{m}}\right) ^{\frac{q_{m-1}}{q_{m}}}\cdots \right) ^{\frac{q_{1}}{q_{2}}%
}\right) ^{\frac{1}{q_{1}}}<\infty
\end{equation*}%
for all continuous $m$-linear operators $B:\ell _{p_{1}}\times \cdots \times
\ell _{p_{m}}\rightarrow Y$ implies that $q_{i}\geq r_{i}$ for all $i\in
\left\{ 2,...,m\right\} .$
\end{lemma}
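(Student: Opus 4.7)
The strategy is to reduce the $m$-linear condition to the $(m-1)$-linear one by promoting each continuous $(m-1)$-linear operator to a continuous $m$-linear operator whose mixed-norm quantity is preserved. Given $A:\ell_{p_2}\times\cdots\times\ell_{p_m}\to Y$, I would define $B:\ell_{p_1}\times\cdots\times\ell_{p_m}\to Y$ by
\[
B(x_1,x_2,\ldots,x_m):=e_1^{\ast}(x_1)\cdot A(x_2,\ldots,x_m),
\]
where $e_1^{\ast}\in\ell_{p_1}^{\ast}$ is the first coordinate functional. Since $|e_1^{\ast}(x_1)|\leq\|x_1\|_{p_1}$, this $B$ is continuous with $\|B\|\leq\|A\|$.

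The key observation is that $B(e_{j_1},e_{j_2},\ldots,e_{j_m})$ vanishes whenever $j_1\geq 2$ and equals $A(e_{j_2},\ldots,e_{j_m})$ for $j_1=1$. Consequently the inner $(m-1)$-layer iterated norm at a fixed $j_1$ is zero for $j_1\geq 2$, and the outer sum over $j_1$ reduces to its single nonzero term at $j_1=1$. A short bookkeeping check, composing the outer $(q_1/q_2)$-power with the $(1/q_1)$-root, shows that this term is precisely the $(1/q_2)$-root of the full $(m-1)$-fold iterated sum for $A$. Hence the mixed-norm sum appearing in the conclusion (for $B$) equals the mixed-norm sum appearing in the hypothesis (for $A$).

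The conclusion then follows immediately: invoking the assumption that every continuous $m$-linear $B$ has a finite iterated sum and applying it to the specific $B$ just constructed, we get that every continuous $(m-1)$-linear $A$ has a finite iterated sum as well; the standing hypothesis of the lemma then forces $q_i\geq r_i$ for each $i\in\{2,\ldots,m\}$. The only real subtlety is the exponent bookkeeping of this collapse, namely verifying that the outer $(q_1/q_2)$-power and $(1/q_1)$-root compose to the $(1/q_2)$-root needed for the $A$-sum; this is a short calculation rather than a genuine technical obstacle, so I expect the whole argument to be essentially a clean embedding of $(m-1)$-linear operators into $m$-linear operators that preserves the mixed-norm quantity.
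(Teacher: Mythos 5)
Your proposal is correct and is essentially the paper's own argument: the authors use exactly the same lifting $B_1(x^{(1)},\ldots,x^{(m)})=x_1^{(1)}A(x^{(2)},\ldots,x^{(m)})$ (your $e_1^{\ast}(x_1)A(x_2,\ldots,x_m)$ is the same map), observe the collapse of the outer sum to the single term $j_1=1$, and conclude via the exponent cancellation $\bigl(S^{q_1/q_2}\bigr)^{1/q_1}=S^{1/q_2}$, just as you describe.
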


\begin{proof}
~Let $A:\ell _{p_{2}}\times \cdots \times \ell _{p_{m}}\rightarrow Y$ be a
continuous $\left( m-1\right) $-linear operator and consider the continuous $%
m$-linear operator $B_{1}:\ell _{p_{1}}\times \cdots \times \ell
_{p_{m}}\rightarrow Y$ given by%
\begin{equation*}
B_{1}(x^{\left( 1\right) },...,x^{\left( m\right) })=x_{1}^{\left( 1\right)
}A\left( x^{\left( 2\right) },...,x^{\left( m\right) }\right) .
\end{equation*}%
Clearly $\left\Vert B_{1}(e_{1},e_{j_{2}},...,e_{j_{m}})\right\Vert
=\left\Vert A(e_{j_{2}},...,e_{j_{m}})\right\Vert $, and since
\begin{eqnarray*}
&&\left( \sum_{j_{1}=1}^{\infty }\left( \sum_{j_{2}=1}^{\infty }\cdots
\left( \sum_{j_{m}=1}^{\infty }\left\Vert
B_{1}(e_{j_{1}},...,e_{j_{m}})\right\Vert ^{q_{m}}\right) ^{\frac{q_{m-1}}{%
q_{m}}}\cdots \right) ^{\frac{q_{1}}{q_{2}}}\right) ^{\frac{1}{q_{1}}} \\
&=&\left( \sum_{j_{2}=1}^{\infty }\cdots \left( \sum_{j_{m}=1}^{\infty
}\left\Vert B_{1}(e_{1},e_{j_{2}},...,e_{j_{m}})\right\Vert ^{q_{m}}\right)
^{\frac{q_{m-1}}{q_{m}}}\cdots \right) ^{\frac{1}{q_{2}}} \\
&=&\left( \sum_{j_{2}=1}^{\infty }\cdots \left( \sum_{j_{m}=1}^{\infty
}\left\Vert A(e_{j_{2}},...,e_{j_{m}})\right\Vert ^{q_{m}}\right) ^{\frac{%
q_{m-1}}{q_{m}}}\cdots \right) ^{\frac{1}{q_{2}}},
\end{eqnarray*}%
the proof is done.
\end{proof}

\bigskip From now on, let $r\geq 2,$ and let $p_{1},...,p_{m}\,\in (r,\infty
]$ be such that
\begin{equation*}
\frac{1}{p_{1}}+...+\frac{1}{p_{m}}<\frac{1}{r}.
\end{equation*}%
For $k=1,...,m$, we define
\begin{equation*}
\lambda _{m-k+1,r}^{p_{k},...,p_{m}}:=\frac{r}{1-\left( \frac{1}{p_{k}}+...+%
\frac{1}{p_{m}}\right) r}.
\end{equation*}%
For a Banach space $Y$ and $1\leq s\leq \infty $, let $\ell _{s}\left(
Y\right) $ be the Banach space of $Y-$valued sequences $\left( y_{i}\right)
_{i=1}^{\infty }$ with the norm
\begin{equation*}
\left\Vert \left( y_{i}\right) _{i=1}^{\infty }\right\Vert _{\ell _{r}\left(
Y\right) }=\left( \sum_{i=1}^{\infty }\left\Vert y_{i}\right\Vert
_{Y}^{s}\right) ^{\frac{1}{s}},
\end{equation*}%
(the usual modification is required if $s=\infty $). When there is no
ambiguity, for a vector $y\in Y$, we denote the norm $\left\Vert
y\right\Vert _{Y}$ by $\left\Vert y\right\Vert $.

We now state and prove our first main theorem. As we mentioned before, it
improves \cite[Proposition 4.3]{dimant} and \cite[Theorem 1.5]{abps2}, by
providing the exact optimal exponents.

\begin{theorem}
\label{661}\bigskip\ Let $q_{1},...,q_{m}>0$, and $Y$ be an
infinite-dimensional Banach space with cotype $\cot Y$. If
\begin{equation*}
\frac{1}{p_{1}}+...+\frac{1}{p_{m}}<\frac{1}{\cot Y},
\end{equation*}%
then the following assertions are equivalent:

(a) There is a constant $C_{p_{1},...,p_{m}}^{Y}\geq 1$ such that%
\begin{equation*}
\left( \sum_{j_{1}=1}^{\infty }\left( \sum_{j_{2}=1}^{\infty }\cdots \left(
\sum_{j_{m}=1}^{\infty }\left\Vert A(e_{j_{1}},...,e_{j_{m}})\right\Vert
^{q_{m}}\right) ^{\frac{q_{m-1}}{q_{m}}}\cdots \right) ^{\frac{q_{1}}{q_{2}}%
}\right) ^{\frac{1}{q_{1}}}\leq C_{p_{1},...,p_{m}}^{Y}\left\Vert
A\right\Vert
\end{equation*}%
for all continuous $m$-linear operators $A:\ell _{p_{1}}\times \cdots \times
\ell _{p_{m}}\rightarrow Y.$

(b) The exponents $q_{1},...,q_{m}$ satisfy
\begin{equation*}
q_{1}\geq \lambda _{m,\cot Y}^{p_{1},...,p_{m}},q_{2}\geq \lambda _{m-1,\cot
Y}^{p_{2},...,p_{m}},...,q_{m-1}\geq \lambda _{2,\cot
Y}^{p_{m-1},p_{m}},q_{m}\geq \lambda _{1,\cot Y}^{p_{m}}.
\end{equation*}
\end{theorem}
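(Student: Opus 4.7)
The proof naturally splits into the two implications $(b)\Rightarrow (a)$ and $(a)\Rightarrow (b)$, and I would handle each by induction on $m$. For the sufficiency, note first that by monotonicity of iterated $\ell^{q}$-norms (increasing any single $q_{k}$ only decreases the left-hand side), it is enough to prove the critical case $q_{k}=\lambda_{m-k+1,\cot Y}^{p_{k},\ldots,p_{m}}$ for each $k$. The base case $m=1$ is the classical Bennett--Carl--Maurey cotype estimate: any bounded linear $u:\ell_{p_{m}}\to Y$ with $p_{m}>\cot Y$ satisfies
\[
\left(\sum_{j}\|u(e_{j})\|^{\lambda_{1,\cot Y}^{p_{m}}}\right)^{1/\lambda_{1,\cot Y}^{p_{m}}}\leq C\,\|u\|,
\]
which follows from the defining cotype inequality applied to $(u(e_{j}))$ together with a Khintchine average. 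For the inductive step I would fix $j_{1},\ldots,j_{m-1}$ and apply the base case to the linear section $x\mapsto A(e_{j_{1}},\ldots,e_{j_{m-1}},x)$ to control the innermost sum, then treat the residual expression as an $(m-1)$-linear operator $\widetilde{A}:\ell_{p_{1}}\times\cdots\times\ell_{p_{m-1}}\to \ell_{q_{m}}(Y)$ whose norm is dominated by $\|A\|$. Since $q_{m}\geq\cot Y$, the target $\ell_{q_{m}}(Y)$ still has cotype $q_{m}$, so the inductive hypothesis applies and the staircase of exponents $\lambda_{1}\leq\lambda_{2}\leq\cdots\leq\lambda_{m}$ (decreasing from outside inwards in the iterated norm, and thus Minkowski-compatible) closes the iteration.

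For the necessity, the lower bound $q_{1}\geq\lambda_{m,\cot Y}^{p_{1},\ldots,p_{m}}$ is already encoded in the previously known sharpness for the uniform Hardy--Littlewood inequality (see \cite[Proposition 4.3]{dimant} and \cite[Theorem 1.5]{abps2}): specialising $(a)$ to $q_{1}=\cdots=q_{m}$ yields the uniform inequality, whose sharp exponent is $\lambda_{m,\cot Y}^{p_{1},\ldots,p_{m}}$, and this sharpness is witnessed by random multilinear forms into $Y$ (using the hypothesis that $Y$ attains its cotype). The remaining constraints $q_{k}\geq\lambda_{m-k+1,\cot Y}^{p_{k},\ldots,p_{m}}$ for $k\geq 2$ then drop out from Lemma~\ref{propo} together with induction on $m$: given the $(m-1)$-linear statement as inductive hypothesis, Lemma~\ref{propo} exhibits how the $m$-linear inequality $(a)$ forces the same bounds on $q_{2},\ldots,q_{m}$ via the lifting $B_{1}(x^{(1)},\ldots,x^{(m)})=x_{1}^{(1)}A(x^{(2)},\ldots,x^{(m)})$.

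The main technical obstacle sits in the inductive step for sufficiency, where one must verify that the auxiliary $(m-1)$-linear operator $\widetilde{A}$ into $\ell_{q_{m}}(Y)$ indeed has operator norm bounded by $\|A\|$ (uniformly in the truncation), and that its target still satisfies a cotype estimate with the right constant---essentially, one needs the cotype machinery to survive the change of target space at each level of the induction. A further subtle point, and what makes this result genuinely finer than \cite{abps2,dimant}, is that the critical exponent $\lambda_{m-k+1,\cot Y}^{p_{k},\ldots,p_{m}}$ depends on which set of $p$-indices is still being summed; earlier approaches that fixed a single global exponent could not detect this stratification, which is exactly what gets rescued here by carrying out the cotype step \emph{innermost first} and peeling off one $p_{j}$ at a time.
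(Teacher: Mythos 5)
Your sufficiency direction is, in outline, the paper's own argument: after reducing to the critical exponents, one peels off the last variable to get an $(m-1)$-linear operator $A_{n,e}:\ell_{p_1}\times\cdots\times\ell_{p_{m-1}}\to\ell_{R}(Y)$ with $R=\lambda_{1,\cot Y}^{p_m}$; the norm bound you flag as the technical obstacle is verified exactly as in the base case, by running the cotype-plus-H\"older computation on the sections $x\mapsto A(x^{(1)},\ldots,x^{(m-1)},x)$ at \emph{arbitrary} unit vectors (not just at basis vectors, as you wrote), giving $\Vert A_{n,e}\Vert\leq C_{\cot Y}(Y)\Vert A\Vert$; and the identity $\lambda_{m-k,R}^{p_k,\ldots,p_{m-1}}=\lambda_{m-k+1,\cot Y}^{p_k,\ldots,p_m}$ closes the induction. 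Likewise, your use of Lemma~\ref{propo} plus induction to force $q_2\geq\lambda_{m-1,\cot Y}^{p_2,\ldots,p_m},\ldots,q_m\geq\lambda_{1,\cot Y}^{p_m}$ is exactly what the paper does. (One small inversion: attainment of the cotype is what the \emph{sufficiency} direction needs, to have $C_{\cot Y}(Y)<\infty$; the necessity direction does not use attainment.)

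The genuine gap is your derivation of $q_1\geq\lambda_{m,\cot Y}^{p_1,\ldots,p_m}$. From (a) with a mixed tuple $(q_1,\ldots,q_m)$, monotonicity only lets you pass to the uniform inequality with exponent $s=\max_i q_i$, so the known sharpness from \cite{abps2} and \cite{dimant} yields $\max_i q_i\geq\lambda_{m,\cot Y}^{p_1,\ldots,p_m}$ --- which says nothing about $q_1$ once the inner exponents are large. A tuple with $q_1<\lambda_{m,\cot Y}^{p_1,\ldots,p_m}$ and $q_2,\ldots,q_m$ huge is not excluded by the uniform-case sharpness; this is precisely the point of the paper's example ($m=3$, $p_i=10$, $Y=\ell_3$: the earlier results only forbid $q_1=q_2=q_3<30$, while the theorem forbids $q_1<30$ outright). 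Random (Kahane--Salem--Zygmund-type) forms cannot witness the outermost bound either, since their mixed norms shrink as the inner exponents grow. What is needed, and what the paper uses, is the deterministic diagonal construction via Maurey--Pisier \cite{pisier}: $Y$ finitely factors $\ell_r\hookrightarrow\ell_\infty$ with $r=\cot Y$, so there are $z_1,\ldots,z_n\in Y$ with $C_1\Vert(a_j)\Vert_\infty\leq\Vert\sum_j a_jz_j\Vert\leq C_2(\sum_j|a_j|^r)^{1/r}$; setting $A_n(x^{(1)},\ldots,x^{(m)})=\sum_{j=1}^{n}x_j^{(1)}\cdots x_j^{(m)}z_j$, H\"older gives $\Vert A_n\Vert\leq C_2\,n^{1/\lambda_{m,\cot Y}^{p_1,\ldots,p_m}}$, while the mixed norm collapses onto the diagonal,
\begin{equation*}
\left( \sum_{j_{1}=1}^{n}\left( \cdots \left( \sum_{j_{m}=1}^{n}\left\Vert A_{n}(e_{j_{1}},...,e_{j_{m}})\right\Vert ^{q_{m}}\right)^{\frac{q_{m-1}}{q_{m}}}\cdots \right)^{\frac{q_{1}}{q_{2}}}\right)^{\frac{1}{q_{1}}}=\left( \sum_{j=1}^{n}\left\Vert z_{j}\right\Vert^{q_{1}}\right)^{\frac{1}{q_{1}}}\geq C_{1}n^{\frac{1}{q_{1}}},
\end{equation*}
an estimate depending \emph{only} on $q_1$, independently of $q_2,\ldots,q_m$; letting $n\to\infty$ forces $q_1\geq\lambda_{m,\cot Y}^{p_1,\ldots,p_m}$. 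Without this (or an equivalent) construction, your necessity argument proves strictly less than the theorem claims.
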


\begin{proof}
From now on, we shall denote $r=\cot Y.$ The proof of the case $m=1$ can be
verified by using a short argument from the theory of absolutely summing
operators, but we prefer to present a self contained argument. It suffices
to note that
\begin{equation*}
\lambda _{1,\cot Y}^{p_{1}}=\frac{\cot Y}{1-\frac{\cot Y}{p_{1}}}=\frac{%
rp_{1}}{p_{1}-r},
\end{equation*}%
and

\begin{eqnarray*}
&&\left( \sum\limits_{j=1}^{n}\left\Vert A\left( e_{j}\right) \right\Vert ^{%
\frac{rp_{1}}{p_{1}-r}}\right) ^{\frac{1}{r}}=\left(
\sum\limits_{j=1}^{n}\left\Vert A\left( \left\Vert A\left( e_{j}\right)
\right\Vert ^{\frac{r}{p_{1}-r}}e_{j}\right) \right\Vert ^{r}\right) ^{\frac{%
1}{r}} \\
&\leq &C_{r}\left( Y\right) \left( \int_{0}^{1}\left\Vert
\sum_{j=1}^{n}r_{j}\left( t\right) \left\Vert A\left( e_{j}\right)
\right\Vert ^{\frac{r}{p_{1}-r}}A\left( e_{j}\right) \right\Vert
^{2}dt\right) ^{\frac{1}{2}} \\
&\leq &C_{r}\left( Y\right) \sup_{t\in \left[ 0,1\right] }\left\Vert
\sum_{j=1}^{n}r_{j}\left( t\right) \left\Vert A\left( e_{j}\right)
\right\Vert ^{\frac{r}{p_{1}-r}}A\left( e_{j}\right) \right\Vert \\
&\leq &C_{r}\left( Y\right) \sup_{\varphi \in B_{Y^{\ast
}}}\sum_{j=1}^{n}\left\vert \varphi \left( \left\Vert A\left( e_{j}\right)
\right\Vert ^{\frac{r}{p_{1}-r}}A\left( e_{j}\right) \right) \right\vert \\
&\leq &C_{r}\left( Y\right) \sup_{\varphi \in B_{Y^{\ast }}}\left(
\sum_{j=1}^{n}\left\Vert A\left( e_{j}\right) \right\Vert ^{\frac{rp_{1}}{%
p_{1}-r}}\right) ^{\frac{1}{p_{1}}}\left( \sum_{j=1}^{n}\left\vert \varphi
\left( A\left( e_{j}\right) \right) \right\vert ^{p_{1}^{\ast }}\right) ^{%
\frac{1}{p_{1}^{\ast }}} \\
&\leq &C_{r}\left( Y\right) \left( \sum_{j=1}^{n}\left\Vert A\left(
e_{j}\right) \right\Vert ^{\frac{rp_{1}}{p_{1}-r}}\right) ^{\frac{1}{p_{1}}%
}\left\Vert A\right\Vert .
\end{eqnarray*}%
So, if (b) is true, then (a) holds.

Assume (a). By the Maurey-Pisier factorization result (see \cite{pisier} and
\cite[pg. 286,287]{Di}) the infinite-dimensional Banach space $Y$ finitely
factors the formal inclusion $\ell _{r}\hookrightarrow \ell _{\infty }$,
i.e., there are constants $C_{1},C_{2}>0$ such that for all $n$ there are
vectors $z_{1},...,z_{n}\in Y$ satisfying%
\begin{equation*}
C_{1}\left\Vert \left( a_{j}\right) _{j=1}^{n}\right\Vert _{\infty }\leq
\left\Vert \sum\limits_{j=1}^{n}a_{j}z_{j}\right\Vert \leq C_{2}\left(
\sum\limits_{j=1}^{n}\left\vert a_{j}\right\vert ^{r}\right) ^{1/r}
\end{equation*}%
for all sequences of scalars $\left( a_{j}\right) _{j=1}^{n}.$ Consider the
continuous linear operator $A_{n}:\ell _{p_{1}}\rightarrow Y$ given by%
\begin{equation*}
A_{n}(x)=\sum\limits_{j=1}^{n}x_{j}z_{j}.
\end{equation*}%
Since%
\begin{equation*}
\frac{1}{p_{1}}+\frac{1}{\lambda _{1,r}^{p_{1}}}=\frac{1}{r},
\end{equation*}%
we have, using the H\"{o}lder inequality,%
\begin{equation*}
\left\Vert A_{n}\right\Vert =\sup_{\left\Vert x\right\Vert \leq 1}\left\Vert
\sum\limits_{j=1}^{n}x_{j}z_{j}\right\Vert \leq C_{2}n^{\frac{1}{\lambda
_{1,r}^{p_{1}}}}.
\end{equation*}%
On the other hand, there is a constant $C_{p_{1}}^{Y}=C$, such that%
\begin{equation*}
C\left\Vert A_{n}\right\Vert \geq \left( \sum_{j=1}^{n}\left\Vert
A_{n}(e_{j})\right\Vert ^{q_{1}}\right) ^{\frac{1}{q_{1}}}\geq C_{1}n^{\frac{%
1}{q_{1}}}.
\end{equation*}%
Since $n$ is arbitrary, $q_{1}\geq \lambda _{1,r}^{p_{1}}$ (i.e. (b) holds),
and this concludes the proof of the case $m=1.$

\bigskip

The proof of the general case is performed by induction on $m$. We know that
the result is valid for $m=1$ and we shall prove that it is valid for a
certain $m$ whenever it is valid for $m-1.$

(a)$\Rightarrow $(b). Let us suppose that
\begin{equation*}
\frac{1}{p_{1}}+...+\frac{1}{p_{m}}<\frac{1}{r}.
\end{equation*}%
\textit{A fortiori,}%
\begin{equation*}
\frac{1}{p_{2}}+...+\frac{1}{p_{m}}<\frac{1}{r}
\end{equation*}%
and, by our induction hypothesis, if there is a constant $%
C_{p_{2},...,p_{m}}^{Y}\geq 1$ such that%
\begin{equation*}
\left( \sum_{j_{2}=1}^{\infty }\left( \sum_{j_{3}=1}^{\infty }\cdots \left(
\sum_{j_{m}=1}^{\infty }\left\Vert A(e_{j_{2}},...,e_{j_{m}})\right\Vert
^{q_{m}}\right) ^{\frac{q_{m-1}}{q_{m}}}\cdots \right) ^{\frac{q_{2}}{q_{3}}%
}\right) ^{\frac{1}{q_{2}}}\leq C_{p_{2},...,p_{m}}^{Y}\left\Vert
A\right\Vert
\end{equation*}%
for all continuous $\left( m-1\right) $-linear operators $A:\ell
_{p_{2}}\times \cdots \times \ell _{p_{m}}\rightarrow Y$, then by Lemma \ref%
{propo} we conclude that (a) implies
\begin{eqnarray*}
q_{2} &\geq &\lambda _{m-1,r}^{p_{2},...,p_{m}}, \\
&&\vdots \\
q_{m-1} &\geq &\lambda _{2,r}^{p_{m-1},p_{m}}, \\
q_{m} &\geq &\lambda _{1,r}^{p_{m}}\text{.}
\end{eqnarray*}%
So, we must only show that
\begin{equation*}
q_{1}\geq \lambda _{m,r}^{p_{1},...,p_{m}}.
\end{equation*}%
As for the $m=1$ case, there are constants $C_{1},C_{2}>0$ such that for all
$n$ there are vectors $z_{1},...,z_{n}\in Y$ satisfying%
\begin{equation}
C_{1}\left\Vert \left( a_{j}\right) _{j=1}^{n}\right\Vert _{\infty }\leq
\left\Vert \sum\limits_{j=1}^{n}a_{j}z_{j}\right\Vert \leq C_{2}\left(
\sum\limits_{j=1}^{n}\left\vert a_{j}\right\vert ^{r}\right) ^{1/r}
\label{pisi}
\end{equation}%
for all sequences of scalars $\left( a_{j}\right) _{j=1}^{n}.$ Consider the
continuous multilinear operator $A_{n}:\ell _{p_{1}}\times \cdots \times
\ell _{p_{m}}\rightarrow Y$ given by%
\begin{equation*}
A_{n}(x^{\left( 1\right) },...,x^{\left( m\right)
})=\sum\limits_{j=1}^{n}x_{j}^{\left( 1\right) }x_{j}^{\left( 2\right)
}...x_{j}^{\left( m\right) }z_{j}.
\end{equation*}%
Since
\begin{equation*}
\frac{1}{\lambda _{m,r}^{p_{1},...,p_{m}}}+\sum_{k=1}^{m}\frac{1}{p_{k}}=%
\frac{1}{r}\text{,}
\end{equation*}%
by the H\"{o}lder inequality we obtain%
\begin{eqnarray*}
\left\Vert A_{n}\right\Vert &=&\sup_{\left\Vert x^{\left( 1\right)
}\right\Vert ,\cdots .\left\Vert x^{\left( m\right) }\right\Vert \leq
1}\left\Vert \sum\limits_{j=1}^{n}x_{j}^{\left( 1\right) }...x_{j}^{\left(
m\right) }z_{j}\right\Vert \leq \sup_{\left\Vert x^{\left( 1\right)
}\right\Vert ,\cdots ,\left\Vert x^{\left( m\right) }\right\Vert \leq
1}C_{2}\left( \sum\limits_{j=1}^{n}\left\vert x_{j}^{\left( 1\right)
}...x_{j}^{\left( m\right) }\right\vert ^{r}\right) ^{1/r} \\
&& \\
&\leq &\sup_{\left\Vert x^{\left( 1\right) }\right\Vert ,\cdots ,\left\Vert
x^{\left( m\right) }\right\Vert \leq 1}C_{2}\left( \prod_{k=1}^{m}\left(
\sum\limits_{j=1}^{n}\left\vert x_{j}^{\left( k\right) }\right\vert
^{p_{k}}\right) ^{1/p_{k}}\right) \left( \sum\limits_{j=1}^{n}\left\vert
1\right\vert ^{\lambda _{m,r}^{p_{1},...,p_{m}}}\right) ^{\frac{1}{\lambda
_{m,r}^{p_{1},...,p_{m}}}} \\
&\leq &C_{2}n^{\frac{1}{\lambda _{m,r}^{p_{1},...,p_{m}}}}.
\end{eqnarray*}%
On the other hand, by (\ref{pisi})
\begin{eqnarray*}
&&\left( \sum_{j_{1}=1}^{n}\left( \sum_{j_{2}=1}^{n}\cdots \left(
\sum_{j_{m}=1}^{n}\left\Vert A_{n}(e_{j_{1}},...,e_{j_{m}})\right\Vert
^{q_{m}}\right) ^{\frac{q_{m-1}}{q_{m}}}\cdots \right) ^{\frac{q_{1}}{q_{2}}%
}\right) ^{\frac{1}{q_{1}}} \\
&=&\left( \sum_{j=1}^{n}\left\Vert A_{n}(e_{j},...,e_{j})\right\Vert
^{q_{1}}\right) ^{\frac{1}{q_{1}}}=\left( \sum_{j=1}^{n}\left\Vert
z_{j}\right\Vert ^{q_{1}}\right) ^{\frac{1}{q_{1}}}\geq C_{1}n^{\frac{1}{%
q_{1}}},
\end{eqnarray*}%
and, since $n$ is arbitrary,
\begin{equation*}
q_{1}\geq \lambda _{m,r}^{p_{1},...,p_{m}}.
\end{equation*}

(b)$\Rightarrow $(a). Let $A:\ell _{p_{1}}\times \cdots \times \ell
_{p_{m}}\rightarrow Y$ be a continuous $m$-linear operator and define, for
all positive integers $n$,%
\begin{equation*}
A_{n,e}:\ell _{p_{1}}\times \cdots \times \ell _{p_{m-1}}\rightarrow \ell
_{\lambda _{1,r}^{p_{m}}}\left( Y\right)
\end{equation*}%
by%
\begin{equation*}
A_{n,e}(x^{\left( 1\right) },...,x^{\left( m-1\right) })=\left( A\left(
x^{\left( 1\right) },...,x^{\left( m-1\right) },e_{j}\right) \right)
_{j=1}^{n}.
\end{equation*}%
We assert that
\begin{equation*}
\left\Vert A_{n,e}\right\Vert \leq C_{r}\left( Y\right) \left\Vert
A\right\Vert .
\end{equation*}%
To see this, since $Y$ has cotype $r$ and using the H\"{o}lder inequality,
\begin{eqnarray*}
&&\left( \sum\limits_{j=1}^{n}\left\Vert A\left( x^{\left( 1\right)
},...,x^{\left( m-1\right) },e_{j}\right) \right\Vert ^{\frac{rp_{m}}{p_{m}-r%
}}\right) ^{\frac{1}{r}} \\
&=&\left( \sum\limits_{j=1}^{n}\left\Vert A\left( x^{\left( 1\right)
},...,x^{\left( m-1\right) },\left\Vert A\left( x^{\left( 1\right)
},...,x^{\left( m-1\right) },e_{j}\right) \right\Vert ^{\frac{r}{p_{m}-r}%
}e_{j}\right) \right\Vert ^{r}\right) ^{\frac{1}{r}} \\
&\leq &C_{r}\left( Y\right) \left( \int_{0}^{1}\left\Vert
\sum_{j=1}^{n}r_{j}\left( t\right) \left\Vert A\left( x^{\left( 1\right)
},...,x^{\left( m-1\right) },e_{j}\right) \right\Vert ^{\frac{r}{p_{m}-r}%
}A\left( x^{\left( 1\right) },...,x^{\left( m-1\right) },e_{j}\right)
\right\Vert ^{2}dt\right) ^{\frac{1}{2}} \\
&\leq &C_{r}\left( Y\right) \sup_{t\in \left[ 0,1\right] }\left\Vert
\sum_{j=1}^{n}r_{j}\left( t\right) \left\Vert A\left( x^{\left( 1\right)
},...,x^{\left( m-1\right) },e_{j}\right) \right\Vert ^{\frac{r}{p_{m}-r}%
}A\left( x^{\left( 1\right) },...,x^{\left( m-1\right) },e_{j}\right)
\right\Vert \\
&\leq &C_{r}\left( Y\right) \sup_{\varphi \in B_{Y^{\ast
}}}\sum_{j=1}^{n}\left\vert \varphi \left( \left\Vert A\left( x^{\left(
1\right) },...,x^{\left( m-1\right) },e_{j}\right) \right\Vert ^{\frac{r}{%
p_{m}-r}}A\left( x^{\left( 1\right) },...,x^{\left( m-1\right)
},e_{j}\right) \right) \right\vert \\
&\leq &C_{r}\left( Y\right) \sup_{\varphi \in B_{Y^{\ast }}}\left(
\sum_{j=1}^{n}\left\Vert A\left( x^{\left( 1\right) },...,x^{\left(
m-1\right) },e_{j}\right) \right\Vert ^{\frac{rp_{m}}{p_{m}-r}}\right) ^{%
\frac{1}{p_{m}}}\left( \sum_{j=1}^{n}\left\vert \varphi \left( A\left(
x^{\left( 1\right) },...,x^{\left( m-1\right) },e_{j}\right) \right)
\right\vert ^{p_{m}^{\ast }}\right) ^{\frac{1}{p_{m}^{\ast }}} \\
&\leq &C_{r}\left( Y\right) \left( \sum_{j=1}^{n}\left\Vert A\left(
x^{\left( 1\right) },...,x^{\left( m-1\right) },e_{j}\right) \right\Vert ^{%
\frac{rp_{m}}{p_{m}-r}}\right) ^{\frac{1}{p_{m}}}\left\Vert A\left(
x^{\left( 1\right) },...,x^{\left( m-1\right) },\cdot \right) \right\Vert .
\end{eqnarray*}%
Therefore,%
\begin{equation*}
\left( \sum\limits_{j=1}^{n}\left\Vert A\left( x^{\left( 1\right)
},...,x^{\left( m-1\right) },e_{j}\right) \right\Vert ^{\frac{rp_{m}}{p_{m}-r%
}}\right) ^{\frac{p_{m}-r}{rp_{m}}}\leq C_{r}\left( Y\right) \left\Vert
A\right\Vert \left\Vert x^{\left( 1\right) }\right\Vert \cdots \left\Vert
x^{\left( m-1\right) }\right\Vert
\end{equation*}%
and thus
\begin{eqnarray*}
&&\left\Vert A_{n,e}\right\Vert =\sup_{\left\Vert x^{\left( 1\right)
}\right\Vert ,\cdots ,\left\Vert x^{\left( m-1\right) }\right\Vert \leq
1}\left\Vert A_{n,e}\left( x^{\left( 1\right) },...,x^{\left( m-1\right)
}\right) \right\Vert \\
&=&\sup_{\left\Vert x^{\left( 1\right) }\right\Vert ,\cdots ,\left\Vert
x^{\left( m-1\right) }\right\Vert \leq 1}\left(
\sum\limits_{j=1}^{n}\left\Vert A\left( x^{\left( 1\right) },...,x^{\left(
m-1\right) },e_{j}\right) \right\Vert ^{\frac{rp_{m}}{p_{m}-r}}\right) ^{%
\frac{p_{m}-r}{rp_{m}}} \\
&\leq &C_{r}\left( Y\right) \left\Vert A\right\Vert ,
\end{eqnarray*}%
as required.

On the other hand, since $X=\ell _{\lambda _{1,r}^{p_{m}}}\left( Y\right) $
has cotype $\lambda _{1,r}^{p_{m}}:=R$ (because $\lambda
_{1,r}^{p_{m}}>r=\cot Y$) and%
\begin{equation*}
\frac{1}{p_{1}}+...+\frac{1}{p_{m-1}}<\frac{1}{\cot Y}-\frac{1}{p_{m}}=\frac{%
1}{\cot X},
\end{equation*}%
we can use the induction hypothesis (with the $\left( m-1\right) $-linear
operator $A_{n,e}$), and conclude that if
\begin{equation*}
q_{1}\geq \lambda _{m-1,R}^{p_{1},...,p_{m-1}},q_{2}\geq \lambda
_{m-2,R}^{p_{2},...,p_{m-1}},...,q_{m-1}\geq \lambda _{1,R}^{p_{m-1}},
\end{equation*}%
then
\begin{eqnarray*}
&&\left( \sum_{j_{1}=1}^{n}\left( \sum_{j_{2}=1}^{n}\cdots \left(
\sum_{j_{m-1}=1}^{n}\left( \sum_{j_{m}=1}^{n}\left\Vert
A(e_{j_{1}},...,e_{j_{m}})\right\Vert ^{R}\right) ^{\frac{q_{m-1}}{R}%
}\right) ^{\frac{q_{m-2}}{q_{m-1}}}\cdots \right) ^{\frac{q_{1}}{q_{2}}%
}\right) ^{\frac{1}{q_{1}}} \\
&=&\left( \sum_{j_{1}=1}^{n}\left( \sum_{j_{2}=1}^{n}\cdots \left(
\sum_{j_{m-1}=1}^{n}\left\Vert A_{n,e}(e_{j_{1}},...,e_{j_{m-1}})\right\Vert
_{X}^{q_{m-1}}\right) ^{\frac{q_{m-2}}{q_{m-1}}}\cdots \right) ^{\frac{q_{1}%
}{q_{2}}}\right) ^{\frac{1}{q_{1}}} \\
&\leq &C_{p_{1},...,p_{m-1}}^{X}\left\Vert A_{n,e}\right\Vert \\
&\leq &C_{p_{1},...,p_{m-1}}^{X}C_{r}\left( Y\right) \left\Vert A\right\Vert
.
\end{eqnarray*}%
Now, the proof is almost done, since%
\begin{eqnarray*}
\lambda _{m-k,R}^{p_{k},...,p_{m-1}} &=&\frac{R}{1-R\left( \frac{1}{p_{k}}+%
\frac{1}{p_{k+1}}+...+\frac{1}{p_{m-1}}\right) } \\
&=&\frac{\lambda _{1,r}^{p_{m}}}{1-\lambda _{1,r}^{p_{m}}\left( \frac{1}{%
p_{k}}+\frac{1}{p_{k+1}}+...+\frac{1}{p_{m-1}}\right) } \\
&=&\frac{\frac{rp_{m}}{p_{m}-r}}{1-\frac{rp_{m}}{p_{m}-r}\left( \frac{1}{%
p_{k}}+\frac{1}{p_{k+1}}+...+\frac{1}{p_{m-1}}\right) } \\
&=&\lambda _{m-k+1,r}^{p_{k},...,p_{m}}
\end{eqnarray*}%
for each $k\in \left\{ 1,...,m-1\right\} $.

To conclude the proof we just need to remark that
\begin{eqnarray*}
&&\left( \sum_{j_{1}=1}^{\infty }\left( \sum_{j_{2}=1}^{\infty }\cdots
\left( \sum_{j_{m-1}=1}^{\infty }\left( \sum_{j_{m}=1}^{\infty }\left\Vert
A(e_{j_{1}},...,e_{j_{m}})\right\Vert ^{q_{m}}\right) ^{\frac{q_{m-1}}{q_{m}}%
}\right) ^{\frac{q_{m-2}}{q_{m-1}}}\cdots \right) ^{\frac{q_{1}}{q_{2}}%
}\right) ^{\frac{1}{q_{1}}} \\
&\leq &\left( \sum_{j_{1}=1}^{\infty }\left( \sum_{j_{2}=1}^{\infty }\cdots
\left( \sum_{j_{m-1}=1}^{\infty }\left( \sum_{j_{m}=1}^{\infty }\left\Vert
A(e_{j_{1}},...,e_{j_{m}})\right\Vert ^{R}\right) ^{\frac{q_{m-1}}{R}%
}\right) ^{\frac{q_{m-2}}{q_{m-1}}}\cdots \right) ^{\frac{q_{1}}{q_{2}}%
}\right) ^{\frac{1}{q_{1}}}
\end{eqnarray*}%
provided $q_{m}\geq R=\lambda _{1,r}^{p_{m}}$.
\end{proof}

\bigskip

Since%
\begin{equation*}
\lambda _{m,r}^{p_{1},...,p_{m}}\geq \lambda _{m-1,r}^{p_{2},...,p_{m}}\geq
\cdots \geq \lambda _{2,r}^{p_{m-1},p_{m}}\geq \lambda _{1,r}^{p_{m}}
\end{equation*}%
and
\begin{equation*}
\frac{1}{\lambda _{m,r}^{p_{1},...,p_{m}}}=\frac{1-r\left( \frac{1}{p_{1}}+%
\frac{1}{p_{2}}+...+\frac{1}{p_{m}}\right) }{r}=\frac{1}{r}-\left( \frac{1}{%
p_{1}}+\frac{1}{p_{2}}+...+\frac{1}{p_{m}}\right) ,
\end{equation*}%
the previous theorem generalizes Proposition 4.3 from \cite{dimant} and
Theorem 1.5 of \cite{abps2}, now with optimal exponents in a stronger sense.

\smallskip

\begin{corollary}
Let $Y$ be an infinite-dimensional Banach space with cotype $\cot Y$ and $%
p_{1},...,p_{m}>\cot Y$, such that
\begin{equation*}
\frac{1}{p_{1}}+...+\frac{1}{p_{m}}<\frac{1}{\cot Y}.
\end{equation*}%
Then there is a constant $B_{p_{1},...,p_{m}}^{Y}\geq 1$ such that%
\begin{equation*}
\left( \sum_{j_{1},...,j_{m}=1}^{\infty }\left\Vert
A(e_{j_{1}},...,e_{j_{m}})\right\Vert ^{\lambda _{m,\cot
Y}^{p_{1},...,p_{m}}}\right) ^{\frac{1}{\lambda _{m,\cot Y}^{p_{1},...,p_{m}}%
}}\leq B_{p_{1},...,p_{m}}^{Y}\left\Vert A\right\Vert
\end{equation*}%
for all continuous $m$-linear operators $A:\ell _{p_{1}}\times \cdots \times
\ell _{p_{m}}\rightarrow Y.$
\end{corollary}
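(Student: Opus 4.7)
The plan is to derive this corollary as an immediate consequence of Theorem \ref{661} by specializing all the exponents $q_1,\ldots,q_m$ to the single value $\lambda_{m,\cot Y}^{p_1,\ldots,p_m}$. The starting observation is that when $q_1=q_2=\cdots=q_m=q$, every intermediate exponent ratio $q_{i-1}/q_i$ in the mixed sum is equal to $1$, so the iterated sum collapses to the single sum
\begin{equation*}
\left(\sum_{j_1,\ldots,j_m=1}^{\infty}\|A(e_{j_1},\ldots,e_{j_m})\|^q\right)^{1/q},
\end{equation*}
which is exactly the left-hand side appearing in the corollary.

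Next, I would verify that the choice $q_1=\cdots=q_m=\lambda_{m,\cot Y}^{p_1,\ldots,p_m}$ satisfies condition (b) of Theorem \ref{661}. For this I would invoke the monotonicity chain
\begin{equation*}
\lambda_{m,\cot Y}^{p_1,\ldots,p_m}\geq\lambda_{m-1,\cot Y}^{p_2,\ldots,p_m}\geq\cdots\geq\lambda_{2,\cot Y}^{p_{m-1},p_m}\geq\lambda_{1,\cot Y}^{p_m},
\end{equation*}
which is already recorded in the excerpt immediately after the proof of Theorem \ref{661} (and which follows at a glance from the defining formula $\lambda_{m-k+1,r}^{p_k,\ldots,p_m}=r/(1-r(1/p_k+\cdots+1/p_m))$, since dropping a reciprocal $1/p_k$ only decreases the numerator of the denominator and hence decreases the whole expression). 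This chain shows that $q_k=\lambda_{m,\cot Y}^{p_1,\ldots,p_m}\geq \lambda_{m-k+1,\cot Y}^{p_k,\ldots,p_m}$ for every $k=1,\ldots,m$, so condition (b) of Theorem \ref{661} is met.

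Applying the implication (b)$\Rightarrow$(a) of Theorem \ref{661} with this common value of $q_i$ then produces a constant $C_{p_1,\ldots,p_m}^Y$ such that the left-hand side of the corollary is bounded by $C_{p_1,\ldots,p_m}^Y\|A\|$; setting $B_{p_1,\ldots,p_m}^Y:=C_{p_1,\ldots,p_m}^Y$ finishes the argument. In this proof there is no real obstacle: the only content beyond quoting Theorem \ref{661} is the monotonicity of the family $\{\lambda_{m-k+1,\cot Y}^{p_k,\ldots,p_m}\}_k$, and that is a one-line verification from the defining formula.
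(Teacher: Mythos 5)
Your proposal is correct and follows exactly the route the paper intends: the paper records the monotonicity chain $\lambda _{m,r}^{p_{1},...,p_{m}}\geq \lambda _{m-1,r}^{p_{2},...,p_{m}}\geq \cdots \geq \lambda _{1,r}^{p_{m}}$ immediately before the corollary precisely so that taking $q_{1}=\cdots =q_{m}=\lambda _{m,\cot Y}^{p_{1},...,p_{m}}$ satisfies condition (b) of Theorem \ref{661}, whence (b)$\Rightarrow$(a) with all exponents equal collapses the mixed sum to the stated multiple sum. Your verification of the chain and the collapse is exactly this argument, so nothing is missing.
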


\bigskip In the case that we do not know if $Y$ attains the infimum of its
cotypes, using the previous arguments, it is possible to prove the following:

\smallskip

\begin{theorem}
\label{662} Let $q_{1},...,q_{m}>0$ and $Y$ be an infinite-dimensional
Banach space with finite cotype. If
\begin{equation*}
\frac{1}{p_{1}}+...+\frac{1}{p_{m}}<\frac{1}{\cot Y},
\end{equation*}%
then the following assertions are equivalent:

(a) There is a constant $C_{p_{1},...,p_{m}}^{Y,\varepsilon }\geq 1$ such
that%
\begin{equation*}
\left( \sum_{j_{1}=1}^{\infty }\left( \sum_{j_{2}=1}^{\infty }\cdots \left(
\sum_{j_{m}=1}^{\infty }\left\Vert A(e_{j_{1}},...,e_{j_{m}})\right\Vert
^{q_{m}+\varepsilon }\right) ^{\frac{q_{m-1}+\varepsilon }{q_{m}+\varepsilon
}}\cdots \right) ^{\frac{q_{1}+\varepsilon }{q_{2}+\varepsilon }}\right) ^{%
\frac{1}{q_{1}+\varepsilon }}\leq C_{p_{1},...,p_{m}}^{Y,\varepsilon
}\left\Vert A\right\Vert
\end{equation*}%
for all continuous $m$-linear operators $A:\ell _{p_{1}}\times \cdots \times
\ell _{p_{m}}\rightarrow Y$, and all $\varepsilon >0.$

(b) The exponents $q_{1},...,q_{m}$ satisfy
\begin{equation*}
q_{1}\geq \lambda _{m,\cot Y}^{p_{1},...,p_{m}},q_{2}\geq \lambda _{m-1,\cot
Y}^{p_{2},...,p_{m}},...,q_{m-1}\geq \lambda _{2,\cot
Y}^{p_{m-1},p_{m}},q_{m}\geq \lambda _{1,\cot Y}^{p_{m}}.
\end{equation*}
\end{theorem}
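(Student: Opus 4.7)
The plan is to imitate the proof of Theorem \ref{661} line by line, inserting an $\varepsilon$-slack to compensate for the possibility that $C_{\cot Y}(Y)=\infty$, and then pass to the limit $\varepsilon\downarrow 0$.

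For the direction (b)$\Rightarrow$(a), I would fix $\varepsilon>0$ and use the continuity of the map
$$s\longmapsto \lambda_{m-k+1,s}^{p_k,\ldots,p_m}=\frac{s}{1-s\bigl(1/p_k+\cdots+1/p_m\bigr)}$$
at $s=\cot Y$, which is available precisely because $1/p_1+\cdots+1/p_m<1/\cot Y$. Choosing $\delta=\delta(\varepsilon)>0$ sufficiently small and setting $s:=\cot Y+\delta$, I would arrange simultaneously that $1/p_1+\cdots+1/p_m<1/s$ and $\lambda_{m-k+1,s}^{p_k,\ldots,p_m}\le q_k+\varepsilon$ for every $k=1,\ldots,m$. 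Any Banach space of finite cotype $\cot Y$ has (attained) cotype $s$ whenever $s>\cot Y$, so Theorem \ref{661} applies verbatim to $Y$ with $\cot Y$ replaced by $s$, yielding the iterated inequality with exponents $\lambda_{m-k+1,s}^{p_k,\ldots,p_m}$ and some constant depending on $\varepsilon$. Monotonicity of the iterated $\ell^{p}$-norm in each of its exponents (larger exponent $\Rightarrow$ smaller norm) then upgrades this to the inequality with exponents $q_i+\varepsilon$.

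For the direction (a)$\Rightarrow$(b), I would repeat the proof of Theorem \ref{661}(a)$\Rightarrow$(b) with every $q_i$ replaced by $q_i+\varepsilon$. The crucial observation is that this argument uses only the Maurey--Pisier factorization of $\ell_{\cot Y}\hookrightarrow\ell_\infty$ through $Y$, which holds for $r=\cot Y$ regardless of attainment. For the exponent $q_1$, a direct Maurey--Pisier construction $A_n(x^{(1)},\ldots,x^{(m)})=\sum_{j=1}^n x_j^{(1)}\cdots x_j^{(m)}z_j$ gives $q_1+\varepsilon\ge\lambda_{m,\cot Y}^{p_1,\ldots,p_m}$; for the remaining $q_k$, I would either induct on $m$ using Lemma \ref{propo} and Theorem \ref{662} itself in dimension $m-1$, or use padded operators of the form $B_n(x^{(1)},\ldots,x^{(m)})=x_1^{(1)}\cdots x_1^{(k-1)}A_n(x^{(k)},\ldots,x^{(m)})$ with $A_n$ the $(m-k+1)$-linear Maurey--Pisier operator, whose nested sum collapses to $\bigl(\sum_{j=1}^n\|z_j\|^{q_k+\varepsilon}\bigr)^{1/(q_k+\varepsilon)}\gtrsim n^{1/(q_k+\varepsilon)}$ while $\|B_n\|\lesssim n^{1/\lambda_{m-k+1,\cot Y}^{p_k,\ldots,p_m}}$. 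In either case one obtains $q_k+\varepsilon\ge\lambda_{m-k+1,\cot Y}^{p_k,\ldots,p_m}$ for each $k$, and letting $\varepsilon\to 0$ delivers (b).

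The main obstacle is modest and essentially bookkeeping: in (b)$\Rightarrow$(a) one must verify the $\delta$-$\varepsilon$ matching (a routine continuity argument), and in (a)$\Rightarrow$(b) one must confirm that Theorem \ref{661}'s lower-bound argument — Maurey--Pisier plus Lemma \ref{propo} plus induction — does not secretly invoke attainment of the cotype anywhere (it does not, since Maurey--Pisier operates with $r=\cot Y$ as an infimum, not a minimum).
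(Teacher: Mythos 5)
Your proposal is correct and is precisely the argument the paper intends: the paper gives no explicit proof of Theorem \ref{662}, remarking only that it follows ``using the previous arguments,'' and your fill-in matches that sketch — Maurey--Pisier at the infimum $r=\cot Y$ (no attainment needed) for (a)$\Rightarrow$(b), and attained cotype $s=\cot Y+\delta$ with a continuity/monotonicity matching of $\delta$ to $\varepsilon$ for (b)$\Rightarrow$(a). One cosmetic caveat: rather than claiming Theorem \ref{661} applies ``verbatim'' with $\cot Y$ replaced by $s$ (which is not the infimum of cotypes of $Y$), you should say that the proof of its implication (b)$\Rightarrow$(a) uses only finiteness of $C_{s}(Y)$ and hence runs with $r=s$, which is exactly how you in fact use it.
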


\smallskip

\begin{remark}
Analogous results obtained by permuting the indices in Theorems \ref{661}
and \ref{662} hold with suitable modifications on the conditions for the
exponents.
\end{remark}

\section{Optimal exponents: scalar-valued case\label{s55}}

In this section we prove a (sharp) multilinear generalization of Theorems %
\ref{hl3} and \ref{t1}. Let $p_{1},...,p_{m}>1$, such that $\frac{1}{p_{1}}+%
\frac{1}{p_{2}}+...+\frac{1}{p_{m}}<1$. For all positive integers $m $ and $%
k=1,...,m,$ let us define
\begin{equation*}
\delta _{m-k+1}^{p_{k},...,p_{m}}:=\frac{1}{1-\left( \frac{1}{p_{k}}+...+%
\frac{1}{p_{m}}\right) }.
\end{equation*}

As we will see, the proof of the following lemma is similar to the proof of (a)$\Rightarrow $(b) of Theorem \ref{661}. In fact, it is somewhat simpler here, since no appeal to the Maurey-Pisier factorization result is needed.

\begin{lemma}
\label{t0multi} Let $m$ be a positive integer, $q_{1},...,q_{m}>0$, and $p_{1},...,p_{m}>1$, with
\begin{equation*}
\frac{1}{p_{1}}+\frac{1}{p_{2}}+...+\frac{1}{p_{m}}<1.
\end{equation*}If there is a constant $C_{p_{1},...,p_{m}}\geq 1$ such that\begin{equation*}
\left( \sum_{j_{1}=1}^{\infty }\left( \sum_{j_{2}=1}^{\infty }\cdots \left(
\sum_{j_{m}=1}^{\infty }\left\vert A(e_{j_{1}},...,e_{j_{m}})\right\vert
^{q_{m}}\right) ^{\frac{q_{m-1}}{q_{m}}}\cdots \right) ^{\frac{q_{1}}{q_{2}}}\right) ^{\frac{1}{q_{1}}}\leq C_{p_{1},...,p_{m}}\left\Vert A\right\Vert
\end{equation*}for all continuous $m$-linear operators $A:\ell _{p_{1}}\times \cdots \times
\ell _{p_{m}}\rightarrow \mathbb{K}$, then the exponents $q_{1},...,q_{m}$
satisfy
\begin{equation*}
q_{1}\geq \delta _{m}^{p_{1},...,p_{m}},q_{2}\geq \delta
_{m-1}^{p_{2},...,p_{m}},...,q_{m-1}\geq \delta
_{2}^{p_{m-1},p_{m}},q_{m}\geq \delta _{1}^{p_{m}}.
\end{equation*}
\end{lemma}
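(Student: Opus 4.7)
The plan is to mirror the (a)$\Rightarrow$(b) direction of Theorem \ref{661}, replacing the Maurey--Pisier factorization by an explicit choice of scalar test operators. The argument will proceed by induction on $m$; the scalar setting is strictly easier since the norms of the test operators can be computed directly via H\"older's inequality, with no cotype constant intervening.

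For the base case $m=1$, I would take $A_n\colon \ell_{p_1}\to \mathbb{K}$ given by $A_n(x)=\sum_{j=1}^{n}x_j$. By H\"older's inequality, $\|A_n\|\leq n^{1/p_1^{\ast}}=n^{1/\delta_1^{p_1}}$, while $\bigl(\sum_{j=1}^{n}|A_n(e_j)|^{q_1}\bigr)^{1/q_1}=n^{1/q_1}$. Feeding these into the hypothesized inequality and letting $n\to\infty$ forces $q_1\geq \delta_1^{p_1}$.

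For the inductive step, I would assume the lemma for $m-1$; since $\frac{1}{p_1}+\cdots+\frac{1}{p_m}<1$ a fortiori yields $\frac{1}{p_2}+\cdots+\frac{1}{p_m}<1$, the inductive hypothesis applies to $(m-1)$-linear operators $A\colon \ell_{p_2}\times\cdots\times\ell_{p_m}\to\mathbb{K}$. Invoking Lemma \ref{propo} (with $Y=\mathbb{K}$ and $r_i=\delta_{m-i+1}^{p_i,\ldots,p_m}$ for $i=2,\ldots,m$) then immediately transports the required lower bounds $q_i\geq \delta_{m-i+1}^{p_i,\ldots,p_m}$ to the $m$-linear setting. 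What remains is to establish $q_1\geq \delta_m^{p_1,\ldots,p_m}$. For this I would employ the diagonal $m$-linear form
\[
A_n(x^{(1)},\ldots,x^{(m)})=\sum_{j=1}^{n}x_j^{(1)}\cdots x_j^{(m)}.
\]
The identity $\frac{1}{\delta_m^{p_1,\ldots,p_m}}+\sum_{k=1}^{m}\frac{1}{p_k}=1$ combined with the generalized H\"older inequality gives $\|A_n\|\leq n^{1/\delta_m^{p_1,\ldots,p_m}}$. On the other hand, $A_n(e_{j_1},\ldots,e_{j_m})$ equals $1$ exactly when $j_1=\cdots=j_m\leq n$ and vanishes otherwise, so the iterated mixed sum collapses to $n^{1/q_1}$. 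Plugging both estimates into (a) and letting $n\to\infty$ yields the desired bound.

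There is essentially no serious obstacle here: the diagonal form plays the role of the Maurey--Pisier vectors $z_1,\ldots,z_n$ used in Theorem \ref{661}, no cotype constant appears, and the generalized H\"older inequality replaces the Kahane--Khinchine/cotype chain. The only small technicality is checking that the ``finiteness'' premise of Lemma \ref{propo} is genuinely provided by hypothesis (a); but this is immediate, since the uniform constant in (a), combined with the padding $B(x^{(1)},\ldots,x^{(m)})=x_1^{(1)}A(x^{(2)},\ldots,x^{(m)})$ used inside Lemma \ref{propo}, automatically supplies the required finiteness for every $(m-1)$-linear $A$.
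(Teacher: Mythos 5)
Your proof is correct and follows essentially the same route as the paper's: induction on $m$, with the padding $B(x^{(1)},\ldots,x^{(m)})=x_{1}^{(1)}A(x^{(2)},\ldots,x^{(m)})$ of Lemma \ref{propo} transporting the bounds $q_{i}\geq \delta_{m-i+1}^{p_{i},\ldots,p_{m}}$ for $i=2,\ldots,m$, and the diagonal form $A_{n}(x^{(1)},\ldots,x^{(m)})=\sum_{j=1}^{n}x_{j}^{(1)}\cdots x_{j}^{(m)}$ together with H\"older's inequality forcing $q_{1}\geq \delta_{m}^{p_{1},\ldots,p_{m}}$. The only cosmetic difference is the base case, where the paper cites the well-known optimality of $\delta_{1}^{p_{1}}=p_{1}^{\ast}$ for linear functionals on $\ell_{p_{1}}$ while you verify it explicitly with $A_{n}(x)=\sum_{j=1}^{n}x_{j}$, which is just the $m=1$ instance of the same diagonal argument.
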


\begin{proof}
Let $p>1$ and $q>0.$ It is well known that if there is a constant $C_{p}\geq
1$ such that
\begin{equation*}
\left( \sum\nolimits_{j_{1}=1}^{\infty }\left\vert A(e_{j_{1}})\right\vert
^{q}\right) ^{\frac{1}{q}}\leq C_{p}\left\Vert A\right\Vert
\end{equation*}%
for all continuous linear operators $A:\ell _{p}\rightarrow \mathbb{K}$,
then $q\geq \delta _{1}^{p};$ thus the case $m=1$, is done.

Let us suppose the case $m-1$ and prove the case $m$ by induction. By
assumption if there is a constant $C_{p_{2},...,p_{m}}\geq 1$ such that%
\begin{equation*}
\left( \sum_{j_{2}=1}^{\infty }\left( \sum_{j_{3}=1}^{\infty }\cdots \left(
\sum_{j_{m}=1}^{\infty }\left\vert A(e_{j_{2}},...,e_{j_{m}})\right\vert
^{q_{m}}\right) ^{\frac{q_{m-1}}{q_{m}}}\cdots \right) ^{\frac{q_{2}}{q_{3}}%
}\right) ^{\frac{1}{q_{2}}}\leq C_{p_{2},...,p_{m}}\left\Vert A\right\Vert
\end{equation*}%
for all continuous $\left( m-1\right) $-linear forms $A:\ell _{p_{2}}\times
\cdots \times \ell _{p_{m}}\rightarrow \mathbb{K}$, then by Lemma \ref{propo}%
, (a) implies
\begin{eqnarray*}
q_{2} &\geq &\delta _{m-1}^{p_{2},...,p_{m}} \\
&&\vdots \\
q_{m-1} &\geq &\delta _{2}^{p_{m-1},p_{m}} \\
q_{m} &\geq &\delta _{1}^{p_{m}}\text{.}
\end{eqnarray*}
It remains to estimate $q_{1}.$ For each $n$ consider the continuous
multilinear form $A_{n}:\ell _{p_{1}}\times \cdots \times \ell
_{p_{m}}\rightarrow \mathbb{K}$ given by%
\begin{equation*}
A_{n}(x^{\left( 1\right) },...,x^{\left( m\right)
})=\sum\limits_{j=1}^{n}x_{j}^{\left( 1\right) }x_{j}^{\left( 2\right)
}...x_{j}^{\left( m\right) }.
\end{equation*}%
Since
\begin{equation*}
\frac{1}{\delta _{m}^{p_{1},\cdots ,p_{m}}}+\sum_{k=1}^{m}\frac{1}{p_{k}}=1%
\text{,}
\end{equation*}%
we use the H\"{o}lder inequality and obtain%
\begin{eqnarray*}
\left\Vert A_{n}\right\Vert &=&\sup_{\left\Vert x^{\left( 1\right)
}\right\Vert ,\cdots ,\left\Vert x^{\left( m\right) }\right\Vert \leq
1}\left\vert \sum\limits_{j=1}^{n}x_{j}^{\left( 1\right) }x_{j}^{\left(
2\right) }...x_{j}^{\left( m\right) }\right\vert \\
&\leq &\sup_{\left\Vert x^{\left( 1\right) }\right\Vert ,\cdots ,\left\Vert
x^{\left( m\right) }\right\Vert \leq 1}\left( \prod_{k=1}^{m}\left(
\sum\limits_{j=1}^{n}\left\vert x_{j}^{\left( k\right) }\right\vert
^{p_{k}}\right) ^{1/p_{k}}\left( \sum\limits_{j=1}^{n}\left\vert
1\right\vert ^{\delta _{m}^{p_{1}\cdots p_{m}}}\right) ^{\frac{1}{\delta
_{m}^{p_{1},\cdots ,p_{m}}}}\right) \\
&\leq &n^{\frac{1}{\delta _{m}^{p_{1},\cdots ,p_{m}}}}.
\end{eqnarray*}%
On the other hand%
\begin{equation*}
\left( \sum_{j_{1}=1}^{n}\left( \sum_{j_{2}=1}^{n}\cdots \left(
\sum_{j_{m}=1}^{n}\left\vert A_{n}(e_{j_{1}},...,e_{j_{m}})\right\vert
^{q_{m}}\right) ^{\frac{q_{m-1}}{q_{m}}}\cdots \right) ^{\frac{q_{1}}{q_{2}}%
}\right) ^{\frac{1}{q_{1}}}=n^{\frac{1}{q_{1}}},
\end{equation*}%
and, since $n$ is arbitrary,
\begin{equation*}
q_{1}\geq \delta _{m}^{p_{1},...,p_{m}}.
\end{equation*}
\end{proof}

\bigskip

The next theorem is the main result of this section. It is a consequence of
our Theorem \ref{661}, and generalizes Theorems \ref{hl3} and \ref{t1}. The
reader should note that the hypothesis $1<p_{m}\leq 2< p_{1},...,p_{m-1}$ is
quite natural, along the lines of a generalization of these Theorems. In
fact, if we had $p_{i},p_{j}\leq 2$ for some $i,j$, then we would have
\begin{equation*}
\frac{1}{p_{1}}+\frac{1}{p_{2}}+...+\frac{1}{p_{m}}\geq 1,
\end{equation*}%
and this is not the environment of a generalization of Theorems \ref{hl3}
and \ref{t1}.

\begin{theorem}
\label{t1multi} Let $m\geq 2$, $q_{1},...,q_{m}>0$, and $1<p_{m}\leq
2<p_{1},...,p_{m-1}$, with
\begin{equation*}
\frac{1}{p_{1}}+\frac{1}{p_{2}}+...+\frac{1}{p_{m}}<1.
\end{equation*}%
The following assertions are equivalent:

(a) There is a constant $C_{p_{1},...,p_{m}}\geq 1$ such that%
\begin{equation*}
\left( \sum_{j_{1}=1}^{\infty }\left( \sum_{j_{2}=1}^{\infty }\cdots \left(
\sum_{j_{m}=1}^{\infty }\left\vert A(e_{j_{1}},...,e_{j_{m}})\right\vert
^{q_{m}}\right) ^{\frac{q_{m-1}}{q_{m}}}\cdots \right) ^{\frac{q_{1}}{q_{2}}%
}\right) ^{\frac{1}{q_{1}}}\leq C_{p_{1},...,p_{m}}\left\Vert A\right\Vert
\end{equation*}%
for all continuous $m$-linear operators $A:\ell _{p_{1}}\times \cdots \times
\ell _{p_{m}}\rightarrow \mathbb{K}$.

(b) The exponents $q_{1},...,q_{m}>0$ satisfy
\begin{equation*}
q_{1}\geq \delta _{m}^{p_{1},...,p_{m}},q_{2}\geq \delta
_{m-1}^{p_{2},...,p_{m}},...,q_{m-1}\geq \delta
_{2}^{p_{m-1},p_{m}},q_{m}\geq \delta _{1}^{p_{m}}.
\end{equation*}
\end{theorem}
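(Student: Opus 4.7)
The plan is as follows. The direction (a)$\Rightarrow$(b) is immediate from Lemma \ref{t0multi}, whose exponents coincide with the $\delta$'s of the present statement. The content of the theorem is (b)$\Rightarrow$(a), which I would deduce from Theorem \ref{661} via the classical duality trick that exploits $p_m\le 2$: given $A:\ell_{p_1}\times\cdots\times\ell_{p_m}\to\mathbb{K}$, I pass to the associated $(m-1)$-linear operator
\begin{equation*}
\tilde A:\ell_{p_1}\times\cdots\times\ell_{p_{m-1}}\to\ell_{p_m^{\ast}},\qquad
\tilde A(x^{(1)},\dots,x^{(m-1)}):=\bigl(A(x^{(1)},\dots,x^{(m-1)},e_j)\bigr)_{j=1}^{\infty}.
\end{equation*}
The duality $(\ell_{p_m})^{\ast}=\ell_{p_m^{\ast}}$ yields $\|\tilde A\|=\|A\|$, and since $1<p_m\le 2$ the target $Y:=\ell_{p_m^{\ast}}$ is infinite-dimensional and attains its cotype $\cot Y=p_m^{\ast}$.

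Next I would check that $Y$ lies in the scope of Theorem \ref{661}. Rewriting $\sum_{i=1}^{m}1/p_i<1$ as
\begin{equation*}
\frac{1}{p_1}+\cdots+\frac{1}{p_{m-1}}\;<\;1-\frac{1}{p_m}\;=\;\frac{1}{p_m^{\ast}}\;=\;\frac{1}{\cot Y}
\end{equation*}
gives the required condition. Theorem \ref{661} then produces the mixed-norm estimate for $\tilde A$ whenever $q_{k}\ge \lambda_{m-k,\,p_m^{\ast}}^{p_k,\dots,p_{m-1}}$ for $k=1,\dots,m-1$. A one-line bookkeeping verification (both reciprocals equal $1/p_m^{\ast}-\sum_{i=k}^{m-1}1/p_i=1-\sum_{i=k}^{m}1/p_i$) shows
\begin{equation*}
\lambda_{m-k,\,p_m^{\ast}}^{p_k,\dots,p_{m-1}}\;=\;\delta_{m-k+1}^{p_k,\dots,p_m}\qquad (k=1,\dots,m-1),
\end{equation*}
so the hypotheses on $q_1,\dots,q_{m-1}$ in (b) are exactly those demanded by Theorem \ref{661}.

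Unpacking $\|\tilde A(e_{j_1},\dots,e_{j_{m-1}})\|_{\ell_{p_m^{\ast}}}=(\sum_{j_m}|A(e_{j_1},\dots,e_{j_m})|^{p_m^{\ast}})^{1/p_m^{\ast}}$ yields the inequality of (a) with innermost exponent $p_m^{\ast}=\delta_1^{p_m}$. To accommodate an arbitrary $q_m\ge\delta_1^{p_m}$, I would invoke the elementary inclusion $\ell_{p_m^{\ast}}\hookrightarrow\ell_{q_m}$ (with constant $1$) slotwise in $(j_1,\dots,j_{m-1})$, and then use monotonicity of the outer nested $\ell_{q_k}$-sums to upgrade the estimate from innermost exponent $p_m^{\ast}$ to $q_m$. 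The only point that really needs care is the index bookkeeping in the $\lambda$/$\delta$ matching above; beyond that, the argument is routine and I do not expect a substantial obstacle.
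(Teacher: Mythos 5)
Your proposal is correct and takes essentially the same route as the paper: its proof of (b)$\Rightarrow$(a) likewise passes to the $(m-1)$-linear operator $A_{n,e}$ with values in $\ell _{\delta _{1}^{p_{m}}}=\ell _{p_{m}^{\ast }}$, bounds its norm by $\left\Vert A\right\Vert $ via the same duality, invokes Theorem \ref{661} (using that $\cot \ell _{p_{m}^{\ast }}=p_{m}^{\ast }$ is attained since $p_{m}\leq 2$), carries out the identical $\lambda $/$\delta $ exponent matching, and finishes with the same monotone inclusion $\ell _{p_{m}^{\ast }}\hookrightarrow \ell _{q_{m}}$ for $q_{m}\geq \delta _{1}^{p_{m}}$, with (a)$\Rightarrow $(b) obtained from Lemma \ref{t0multi} exactly as you say. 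The only cosmetic difference is that the paper truncates to the first $n$ coordinates of the last variable and lets $n$ grow, whereas you work directly with the infinite sequence.
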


\begin{proof}
{(a)$\Rightarrow $(b) is a particular case of Lemma \ref{t0multi}.}\newline
(b)$\Rightarrow $(a). Let $A:\ell _{p_{1}}\times \cdots \times \ell
_{p_{m}}\rightarrow \mathbb{K}$ be a continuous $m$-linear operator and
define, for all positive integers $n$,%
\begin{equation*}
A_{n,e}:\ell _{p_{1}}\times \cdots \times \ell _{p_{m-1}}\rightarrow \ell
_{\delta _{1}^{p_{m}}}
\end{equation*}%
by%
\begin{equation*}
A_{n,e}(x^{\left( 1\right) },...,x^{\left( m-1\right) })=\left( A\left(
x^{\left( 1\right) },...,x^{\left( m-1\right) },e_{j}\right) \right)
_{j=1}^{n}.
\end{equation*}%
Note that
\begin{equation*}
\left\Vert A_{n,e}\right\Vert \leq \left\Vert A\right\Vert .
\end{equation*}%
In fact, we obviously have
\begin{equation*}
\left( \sum\limits_{j=1}^{n}\left\vert A\left( x^{\left( 1\right)
},...,x^{\left( m-1\right) },e_{j}\right) \right\vert ^{\frac{p_{m}}{p_{m}-1}%
}\right) ^{\frac{p_{m}-1}{p_{m}}}\leq \left\Vert A\right\Vert \left\Vert
x^{\left( 1\right) }\right\Vert \cdots \left\Vert x^{\left( m-1\right)
}\right\Vert .
\end{equation*}%
Therefore
\begin{eqnarray*}
&&\left\Vert A_{n,e}\right\Vert =\sup_{\left\Vert x^{\left( 1\right)
}\right\Vert \cdots \left\Vert x^{\left( m-1\right) }\right\Vert \leq
1}\left\Vert A_{n,e}\left( x^{\left( 1\right) },...,x^{\left( m-1\right)
}\right) \right\Vert \\
&=&\sup_{\left\Vert x^{\left( 1\right) }\right\Vert \cdots \left\Vert
x^{\left( m-1\right) }\right\Vert \leq 1}\left(
\sum\limits_{j=1}^{n}\left\vert A\left( x^{\left( 1\right) },...,x^{\left(
m-1\right) },e_{j}\right) \right\vert ^{\frac{p_{m}}{p_{m}-1}}\right) ^{%
\frac{p_{m}-1}{p_{m}}} \\
&\leq &\left\Vert A\right\Vert .
\end{eqnarray*}%
On the other hand, since $\ell _{\delta _{1}^{p_{m}}}$ has cotype $\delta
_{1}^{p_{m}}:=r$ (because $p_{m}\leq 2$) and
\begin{equation*}
\frac{1}{p_{1}}+...+\frac{1}{p_{m-1}}<1-\frac{1}{p_{m}}=\frac{1}{r},
\end{equation*}%
we can invoke Theorem \ref{661} for $\left( m-1\right) $-linear operators.
Thus, if
\begin{equation*}
q_{1}\geq \lambda _{m-1,r}^{p_{1},...,p_{m-1}},q_{2}\geq \lambda
_{m-2,r}^{p_{2},...,p_{m-1}},...,q_{m-1}\geq \lambda _{1,r}^{p_{m-1}},
\end{equation*}%
we have
\begin{eqnarray*}
&&\left( \sum_{j_{1}=1}^{n}\left( \sum_{j_{2}=1}^{n}\cdots \left(
\sum_{j_{m-1}=1}^{n}\left( \sum_{j_{m}=1}^{n}\left\vert
A(e_{j_{1}},...,e_{j_{m}})\right\vert ^{r}\right) ^{\frac{q_{m-1}}{r}%
}\right) ^{\frac{q_{m-2}}{q_{m-1}}}\cdots \right) ^{\frac{q_{1}}{q_{2}}%
}\right) ^{\frac{1}{q_{1}}} \\
&=&\left( \sum_{j_{1}=1}^{n}\left( \sum_{j_{2}=1}^{n}\cdots \left(
\sum_{j_{m-1}=1}^{n}\left\Vert A_{n,e}(e_{j_{1}},...,e_{j_{m-1}})\right\Vert
_{\ell _{\delta _{1}^{p_{m}}}}^{q_{m-1}}\right) ^{\frac{q_{m-2}}{q_{m-1}}%
}\cdots \right) ^{\frac{q_{1}}{q_{2}}}\right) ^{\frac{1}{q_{1}}} \\
&\leq &C_{p_{1},...,p_{m-1}}^{\ell _{\delta _{1}^{p_{m}}}}\left\Vert
A_{n,e}\right\Vert \\
&\leq &C_{p_{1},...,p_{m-1}}^{\ell _{\delta _{1}^{p_{m}}}}\left\Vert
A\right\Vert .
\end{eqnarray*}%
Since%
\begin{eqnarray*}
\lambda _{m-k,r}^{p_{k},...,p_{m-1}} &=&\frac{\delta _{1}^{p_{m}}}{1-\delta
_{1}^{p_{m}}\left( \frac{1}{p_{k}}+\frac{1}{p_{k+1}}+...+\frac{1}{p_{m-1}}%
\right) } \\
&=&\delta _{m-k+1}^{p_{k},...,p_{m}}
\end{eqnarray*}%
for each $k\in \left\{ 1,...,m-1\right\} $, and

\begin{eqnarray*}
&&\left( \sum_{j_{1}=1}^{\infty }\left( \sum_{j_{2}=1}^{\infty }\cdots
\left( \sum_{j_{m-1}=1}^{\infty }\left( \sum_{j_{m}=1}^{\infty }\left\vert
A(e_{j_{1}},...,e_{j_{m}})\right\vert ^{q_{m}}\right) ^{\frac{q_{m-1}}{q_{m}}%
}\right) ^{\frac{q_{m-2}}{q_{m-1}}}\cdots \right) ^{\frac{q_{1}}{q_{2}}%
}\right) ^{\frac{1}{q_{1}}} \\
&\leq &\left( \sum_{j_{1}=1}^{\infty }\left( \sum_{j_{2}=1}^{\infty }\cdots
\left( \sum_{j_{m-1}=1}^{\infty }\left( \sum_{j_{m}=1}^{\infty }\left\vert
A(e_{j_{1}},...,e_{j_{m}})\right\vert ^{r}\right) ^{\frac{q_{m-1}}{r}%
}\right) ^{\frac{q_{m-2}}{q_{m-1}}}\cdots \right) ^{\frac{q_{1}}{q_{2}}%
}\right) ^{\frac{1}{q_{1}}}
\end{eqnarray*}%
provided $q_{m}\geq r=\delta _{1}^{p_{m}}$, the proof is done.
\end{proof}

\bigskip

Since%
\begin{equation*}
\delta _{m}^{p_{1},...,p_{m}}\geq \delta _{m-1}^{p_{2},...,p_{m}}\geq \cdots
\geq \delta _{2}^{p_{m-1},p_{m}}\geq \delta _{1}^{p_{m}},
\end{equation*}%
and
\begin{equation*}
\delta _{m}^{p_{1},...,p_{m}}=\frac{1}{1-\left( \frac{1}{p_{1}}+\frac{1}{%
p_{2}}+...+\frac{1}{p_{m}}\right) },
\end{equation*}%
then the previous theorem generalizes Theorems \ref{hl3} and \ref{t1}, with
optimal exponents for the multilinear form case.

\begin{corollary}
\label{555666} Let $m\geq 2$, $1<p_{m}\leq 2<p_{1},...,p_{m-1}$, with
\begin{equation*}
\frac{1}{p_{1}}+...+\frac{1}{p_{m}}<1.
\end{equation*}%
Then there is a constant $C_{p_{1},...,p_{m}}\geq 1$ such that%
\begin{equation}
\left( \sum_{j_{1},...,j_{m}=1}^{\infty }\left\vert
A(e_{j_{1}},...,e_{j_{m}})\right\vert ^{\frac{1}{1-\left( \frac{1}{p_{1}}%
+...+\frac{1}{p_{m}}\right) }}\right) ^{1-\left( \frac{1}{p_{1}}+...+\frac{1%
}{p_{m}}\right) }\leq C_{p_{1},...,p_{m}}\left\Vert A\right\Vert  \label{q11}
\end{equation}%
for all continuous $m$-linear operators $A:\ell _{p_{1}}\times \cdots \times
\ell _{p_{m}}\rightarrow \mathbb{K}$.
\end{corollary}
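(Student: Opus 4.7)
The plan is to deduce the corollary directly from Theorem \ref{t1multi} by specializing all exponents $q_1, \ldots, q_m$ to a common value. Specifically, I will set
\[
q_1 = q_2 = \cdots = q_m = \delta_m^{p_1,\ldots,p_m} = \frac{1}{1-\left(\frac{1}{p_1}+\cdots+\frac{1}{p_m}\right)},
\]
and verify that condition (b) of Theorem \ref{t1multi} holds for this choice.

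The key observation is the monotonicity chain
\[
\delta_m^{p_1,\ldots,p_m} \geq \delta_{m-1}^{p_2,\ldots,p_m} \geq \cdots \geq \delta_2^{p_{m-1},p_m} \geq \delta_1^{p_m},
\]
which follows immediately from the definition of $\delta_{m-k+1}^{p_k,\ldots,p_m}$ since dropping positive terms $\frac{1}{p_i}$ from the denominator $1 - (\frac{1}{p_k}+\cdots+\frac{1}{p_m})$ only makes the fraction smaller. With $q_k = \delta_m^{p_1,\ldots,p_m}$ for every $k$, this chain shows the required inequalities $q_k \geq \delta_{m-k+1}^{p_k,\ldots,p_m}$ for all $k = 1, \ldots, m$, so condition (b) is satisfied.

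Applying Theorem \ref{t1multi}, I obtain a constant $C_{p_1,\ldots,p_m} \geq 1$ such that the iterated-norm inequality in (a) holds with all exponents equal. When all exponents in the nested sum are equal to $\delta_m^{p_1,\ldots,p_m}$, each ratio $q_{k-1}/q_k$ equals $1$, so the telescoping collapses the iterated sum into a single multiple sum:
\[
\left( \sum_{j_1=1}^{\infty} \cdots \left( \sum_{j_m=1}^{\infty} |A(e_{j_1},\ldots,e_{j_m})|^{\delta_m^{p_1,\ldots,p_m}} \right)^{q_{m-1}/q_m} \cdots \right)^{1/q_1} = \left( \sum_{j_1,\ldots,j_m=1}^{\infty} |A(e_{j_1},\ldots,e_{j_m})|^{\delta_m^{p_1,\ldots,p_m}} \right)^{1/\delta_m^{p_1,\ldots,p_m}}.
\]

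Finally, since $\frac{1}{\delta_m^{p_1,\ldots,p_m}} = 1 - \left( \frac{1}{p_1} + \cdots + \frac{1}{p_m} \right)$, the displayed inequality \eqref{q11} follows. There is no real obstacle here; the corollary is essentially the diagonal case of Theorem \ref{t1multi}, and the only thing to check carefully is the monotonicity of the $\delta$-exponents (which the authors in fact record immediately after the proof of Theorem \ref{t1multi}).
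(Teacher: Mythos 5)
Your proposal is correct and matches the paper's own derivation: the authors obtain Corollary \ref{555666} precisely by noting the monotonicity chain $\delta _{m}^{p_{1},...,p_{m}}\geq \cdots \geq \delta _{1}^{p_{m}}$ together with $\frac{1}{\delta _{m}^{p_{1},...,p_{m}}}=1-\left( \frac{1}{p_{1}}+...+\frac{1}{p_{m}}\right)$, and then taking all exponents in Theorem \ref{t1multi}(b) equal to $\delta _{m}^{p_{1},...,p_{m}}$, exactly as you do. Your verification of condition (b) and the collapse of the iterated sums into a single multiple sum are both sound, so nothing is missing.
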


In the final section we show that the optimal constant $C_{p_{1},...,p_{m}}$
of Theorem \ref{t1multi} and Corollary \ref{555666} is precisely $1.$

\section{Optimal constants\label{s4}}

The Banach spaces in this section are considered over the complex scalar
field. Let us begin by recalling that the Rademacher matrices $R_{n}=\left(
r_{ij}^{\left( n\right) }\right) $, $i=1,...,2^{n}$, $j=1,...,n$,~are the $%
2^{n}\times n$ matrices defined recursively as follows:%
\begin{equation*}
R_{1}=\left(
\begin{array}{c}
1 \\
-1%
\end{array}%
\right) ,\text{ }R_{n+1}=\left(
\begin{tabular}{l|l}
$%
\begin{array}{c}
1 \\
\vdots \\
1%
\end{array}%
$ & $R_{n}$ \\ \hline
$%
\begin{array}{c}
-1 \\
\vdots \\
-1%
\end{array}%
$ & $R_{n}$%
\end{tabular}%
\right) ,
\end{equation*}%
for $n\in \mathbb{N}$. Note that $r_{ij}^{\left( n\right) }=r_{j}\left(
\frac{2i-1}{2^{n+1}}\right) $, where $r_{j}$ denotes the $j$-th Rademacher
function.

Let $1\leq p\leq 2$ and $0<s<\infty $. Recall that a Banach space $X$ has
\emph{type} $p$ (see \cite{kato}) if there is a constant $C>0$ such that, no
matter how we select finitely many vectors $x_{1},\dots ,x_{n}\in X$,%
\begin{equation}
\left( \int_{\lbrack 0,1]}\left\Vert \sum_{k=1}^{n}r_{k}(t)x_{k}\right\Vert
^{s}dt\right) ^{1/s}\leq C\left( \sum_{k=1}^{n}\Vert x_{k}\Vert ^{p}\right)
^{\frac{1}{p}},  \label{typo}
\end{equation}%
where $r_{k}$ denotes the $k$-th Rademacher function. It is well known that
if (\ref{typo}) is satisfied for a certain $s>0$, then it is satisfied for
all $s>0.$ For a fixed $s$, the smallest of all constants $C$ will be
denoted by $T_{p,s}\left( X\right) $.

In the following result of \cite{kato}, type and cotype properties are
described via the linear operators induced by the Rademacher matrices and
their transposes:

\begin{proposition}
\label{propojapa}(See \cite[Proposition 2.3]{kato}) Let $X$ be a Banach
space.

(i) Let $1<p\leq 2$. Then $X$ has type $p$ if and only if there exist some $%
s $, $1\leq s<\infty ~,$ and a constant $M$ such that%
\begin{equation*}
\left\Vert R_{n}:\ell _{p}^{n}\left( X\right) \rightarrow \ell
_{s}^{2^{n}}\left( X\right) \right\Vert \leq M2^{\frac{n}{s}},
\end{equation*}%
for all $n\in \mathbb{N}$. Moreover, $T_{p,s}\left( X\right) \leq M$.

(ii) Let $2\leq q<\infty $, and $t_{R_{n}}$ be the transposed matrix of $%
R_{n}$. Then $X$ has cotype $q$ if and only if there exist some $s$, $1\leq
s<\infty ,$ and a constant $M$ such that%
\begin{equation*}
\left\Vert t_{R_{n}}:\ell _{s}^{2^{n}}\left( X\right) \rightarrow \ell
_{q}^{n}\left( X\right) \right\Vert \leq M2^{\frac{n}{s^{\ast }}},
\end{equation*}%
for all $n\in \mathbb{N}$. Moreover, $C_{q,s}\left( X\right) \leq M$.
\end{proposition}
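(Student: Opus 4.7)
The plan rests on a single identity: for $a = (a_1, \ldots, a_n) \in X^n$ and $s \in [1, \infty)$,
\[
\int_0^1 \Bigl\| \sum_{j=1}^n r_j(t) a_j \Bigr\|^s dt \;=\; 2^{-n} \sum_{i=1}^{2^n} \Bigl\| \sum_{j=1}^n r_{ij}^{(n)} a_j \Bigr\|^s \;=\; 2^{-n} \| R_n a \|_{\ell^{2^n}_s(X)}^s,
\]
which holds because each Rademacher function $r_j$ with $j \leq n$ is constant on every dyadic interval $I_i = ((i-1)/2^n, i/2^n]$ with value $r_j(t_i) = r_{ij}^{(n)}$ at $t_i = (2i-1)/2^{n+1}$. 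Assertion (i) is then essentially a tautology: the type-$p$ inequality $(\int \|\sum r_j a_j\|^s dt)^{1/s} \leq T \|a\|_{\ell_p^n(X)}$ is, after multiplying both sides by $2^{n/s}$, exactly the operator estimate $\|R_n : \ell_p^n(X) \to \ell_s^{2^n}(X)\| \leq T \cdot 2^{n/s}$; both directions of the equivalence, and the sharp relation $T_{p,s}(X) \leq M$, follow by substitution.

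For assertion (ii), the same identity rephrases cotype $q$ as the \emph{lower} estimate $\|R_n a\|_{\ell_s^{2^n}(X)} \geq C^{-1} \cdot 2^{n/s} \|a\|_{\ell_q^n(X)}$. The easier implication --- from the operator bound on $t_{R_n}$ to cotype with $C_{q,s}(X) \leq M$ --- uses the orthogonality $t_{R_n} R_n = 2^n I_n$, which is equivalent to $\sum_{i=1}^{2^n} r_{ij} r_{ik} = 2^n \delta_{jk}$. Given $a \in X^n$, the choice $b = R_n a$ yields $t_{R_n} b = 2^n a$, so
\[
2^n \|a\|_{\ell_q^n(X)} \;=\; \|t_{R_n} b\|_{\ell_q^n(X)} \;\leq\; M \cdot 2^{n/s^*} \|b\|_{\ell_s^{2^n}(X)} \;=\; M \cdot 2^n \Bigl(\int_0^1 \Bigl\| \textstyle\sum_j r_j(t) a_j \Bigr\|^s dt \Bigr)^{1/s},
\]
and cancelling the factor $2^n$ gives the cotype-$q$ inequality with constant $M$.

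The converse implication --- extracting the operator bound on $t_{R_n}$ from cotype --- is the substantial step. The natural factorisation is $t_{R_n} = t_{R_n} P$, where $P = 2^{-n} R_n t_{R_n}$ is the $\ell^2$-orthogonal projection onto the image of $R_n$ (indeed $t_{R_n} P = 2^{-n} t_{R_n} R_n t_{R_n} = t_{R_n}$, and $P^2 = P$). For $y \in \ell_s^{2^n}(X)$, set $a = 2^{-n} t_{R_n} y$, so that $R_n a = Py$; the cotype-derived lower bound on $R_n$ then gives $\|t_{R_n} y\|_{\ell_q^n(X)} = 2^n \|a\|_{\ell_q^n(X)} \leq C \cdot 2^{n/s^*} \|Py\|_{\ell_s^{2^n}(X)}$. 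The main obstacle is to bound $\|P\|_{\ell_s^{2^n}(X) \to \ell_s^{2^n}(X)}$ independently of $n$, since the scalar $\ell^2$-orthogonality does not automatically pass to vector-valued sequences or to $s \neq 2$. The intended route is to first establish the operator bound for one convenient value $s_0$ (where $P$ is controllable via, e.g., a Khintchine-type argument or the contraction principle) and then transfer to any $s \geq s_0$ via the Hölder inequality $\|y\|_{\ell_{s_0}^{2^n}} \leq (2^n)^{1/s_0 - 1/s} \|y\|_{\ell_s^{2^n}}$; the arithmetic $2^{n/s_0^*} \cdot 2^{n(1/s_0 - 1/s)} = 2^{n/s^*}$ shows that this transfer exactly preserves the required exponent, so only a universal $s$-dependent factor is picked up, which can be absorbed into $M$. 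Since the proposition merely asserts the existence of \emph{some} $s$, this suffices.
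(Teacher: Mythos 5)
There is no proof in the paper to compare against: Proposition \ref{propojapa} is imported verbatim from \cite[Proposition 2.3]{kato}, so your proposal must stand on its own. Much of it does. The discretization identity $\int_0^1\bigl\|\sum_{j=1}^n r_j(t)a_j\bigr\|^s\,dt=2^{-n}\|R_na\|_{\ell_s^{2^n}(X)}^s$ is correct, part (i) is indeed the tautology you describe, and your argument for the implication (bound on $t_{R_n}$) $\Rightarrow$ (cotype $q$ with $C_{q,s}(X)\leq M$), via $t_{R_n}R_n=2^nI_n$ and $b=R_na$, is complete and clean. It is worth noting that this is the only direction of (ii) the paper ever uses: Corollary \ref{cotlp} passes from the computed norm of $R_n$ on $\ell_{\mathbf{p}}$-spaces, by duality, to the $t_{R_n}$-bound and then to $C_{t^{\ast},s}(\ell_{\mathbf{p}})=1$.

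The converse direction of (ii), which you correctly isolate as the substantial step, cannot be completed along the lines you sketch, and in fact fails for general $X$ as stated. Your $P=2^{-n}R_nt_{R_n}$, under the identification of $\ell_s^{2^n}(X)$ with dyadic step functions in $L_s([0,1];X)$, is exactly the Rademacher projection $f\mapsto\sum_{j\leq n}r_j\int_0^1 r_jf$. Uniform boundedness of this projection on $L_s(X)$ is the definition of K-convexity, which by Pisier's theorem is equivalent to $X$ having nontrivial \emph{type}; finite cotype does not imply it ($\ell_1$ has cotype $2$ and trivial type). Neither the contraction principle ($P$ is not a coordinatewise multiplier) nor a Khintchine-type argument (scalar-valued) controls $P$; the general bound is only $\|P\|\leq\bigl\|\sum_{j\leq n}r_j\bigr\|_{L_1}\sim\sqrt{n}$ by Young's inequality on the Cantor group. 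Your H\"older transfer arithmetic from $s_0$ to $s\geq s_0$ is fine, but there is no admissible $s_0$ to start from. Worse, the implication itself is false in this generality: since $\bigl(\ell_s^{2^n}(X)\bigr)^{\ast}=\ell_{s^{\ast}}^{2^n}(X^{\ast})$ isometrically and the adjoint of $t_{R_n}$ is $R_n$ acting on $X^{\ast}$-valued sequences, the displayed bound for any $s>1$ is, by your own part (i), \emph{equivalent} to $X^{\ast}$ having type $q^{\ast}$ with $T_{q^{\ast},s^{\ast}}(X^{\ast})\leq M$; and $s=1$ is impossible, since testing $t_{R_n}$ on $y=e_i\otimes x$ gives $\|t_{R_n}\|\geq n^{1/q}$ while $2^{n/s^{\ast}}=1$. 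Thus ``cotype $q$ $\Rightarrow$ bound'' would force ``cotype $q$ of $X$ $\Rightarrow$ type $q^{\ast}$ of $X^{\ast}$'', refuted by $X=\ell_1$, $q=2$. The quoted statement is therefore only safe under an additional hypothesis such as K-convexity --- satisfied by the reflexive spaces $\ell_{\mathbf{p}}$, $1<p_i<\infty$, to which the paper actually applies it --- and your proof strategy for the hard direction founders at precisely this point.
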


\begin{lemma}
\bigskip \label{lemmajapa}(See \cite[Lemma 2.3]{kato}) Let $H$ be a Hilbert
space. Then%
\begin{equation*}
2^{\frac{n}{2}}=\left\Vert R_{n}:\ell _{2}^{n}\left( H\right) \rightarrow
\ell _{2}^{2^{n}}\left( H\right) \right\Vert =\left\Vert t_{R_{n}}:\ell
_{2}^{2^{n}}\left( H\right) \rightarrow \ell _{2}^{n}\left( H\right)
\right\Vert ,
\end{equation*}%
for all $n\in \mathbb{N}$.
\end{lemma}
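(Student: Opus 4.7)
The plan is to reduce the Hilbert-space-valued operator norms to their scalar counterparts and then to compute those scalar norms by recognizing that the columns of $R_n$ are mutually orthogonal vectors of $\ell_2^{2^n}$-norm $2^{n/2}$. The first step is the standard Hilbert-space tensorization: for any real $m\times n$ matrix $A$, fix an orthonormal basis $\{e_\alpha\}$ of $H$ and expand $x_j=\sum_\alpha c_{j\alpha}e_\alpha$ for $x=(x_j)\in \ell_2^n(H)$. Parseval and the Pythagorean identity give
\begin{equation*}
\sum_{i=1}^{m}\Bigl\|\sum_{j=1}^{n} a_{ij}x_j\Bigr\|_H^2=\sum_\alpha\sum_{i=1}^{m}\Bigl|\sum_{j=1}^{n} a_{ij}c_{j\alpha}\Bigr|^2\le\|A\|^2\sum_\alpha\sum_j|c_{j\alpha}|^2=\|A\|^2\|x\|_{\ell_2^n(H)}^2,
\end{equation*}
while the reverse inequality is trivial by testing on a one-dimensional subspace of $H$. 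Applied once to $A=R_n$ and once to $A={}^tR_n$, this reduces the lemma to computing the scalar operator norm $\|R_n:\ell_2^n\to\ell_2^{2^n}\|$, which coincides with $\|{}^tR_n:\ell_2^{2^n}\to\ell_2^n\|$ since both equal the largest singular value of $R_n$.

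For the singular-value computation I would evaluate the Gram matrix ${}^tR_n R_n$. Using the identification $r_{ij}^{(n)}=r_j\!\bigl(\tfrac{2i-1}{2^{n+1}}\bigr)$ together with the observation that each Rademacher function $r_j$ with $j\le n$ is constant on every dyadic interval of length $2^{-n}$, the sum $\sum_{i=1}^{2^n}r_{ij}^{(n)}r_{ik}^{(n)}$ is a midpoint Riemann sum that evaluates \emph{exactly} to $2^n\int_0^1 r_j(t)r_k(t)\,dt$, namely $2^n$ when $j=k$ and $0$ otherwise. Hence
\begin{equation*}
{}^tR_n\,R_n = 2^n\,I_n,
\end{equation*}
so every singular value of $R_n$ equals $2^{n/2}$, and both operator norms in the lemma equal $2^{n/2}$ as claimed.

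There is no serious technical obstacle here. The one conceptual point that must be noticed is that the discrete orthogonality of the columns of $R_n$ is \emph{exact}---not merely an asymptotic approximation to $\int_0^1 r_j r_k\,dt$---precisely because the sampling points $(2i-1)/2^{n+1}$ lie one in each dyadic interval on which all of $r_1,\dots,r_n$ are constant. Once this is recognized, the remainder is a one-line linear-algebra computation together with the routine Hilbert-space tensor-product identity used in the reduction step; the induction on the block structure of $R_n$ is not needed, though it furnishes an alternative verification of $ {}^tR_n R_n = 2^n I_n$ if one prefers.
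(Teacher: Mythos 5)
Your proof is correct. Note, however, that the paper contains no proof of this lemma at all: it is quoted directly from Kato, Miyazaki and Takahashi \cite[Lemma 2.3]{kato}, so there is no internal argument to compare against, and your write-up supplies a complete, self-contained verification of the cited fact. Both of your steps check out: the tensorization identity correctly gives $\left\Vert A\otimes I_{H}\right\Vert =\left\Vert A\right\Vert$ for a scalar matrix $A$, and the discrete orthogonality $\sum_{i=1}^{2^{n}}r_{ij}^{(n)}r_{ik}^{(n)}=2^{n}\delta _{jk}$ is indeed exact, for precisely the reason you isolate --- the sampling points $(2i-1)/2^{n+1}$ pick one point from each dyadic interval of length $2^{-n}$ on which $r_{1},\dots ,r_{n}$ are all constant, so the midpoint sum reproduces $2^{n}\int_{0}^{1}r_{j}r_{k}\,dt$ without error. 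Two small streamlinings are available. First, the scalar reduction is dispensable: $\ell _{2}^{m}(H)$ is itself a Hilbert space, and $t_{R_{n}}$ induces exactly the Hilbert-space adjoint of $R_{n}\otimes I_{H}$, so the equality of the two operator norms is immediate without passing through singular values of the scalar matrix. Second, your Gram identity ${}^{t}R_{n}R_{n}=2^{n}I_{n}$ yields slightly more than the stated norm: expanding
\begin{equation*}
\sum_{i=1}^{2^{n}}\Bigl\Vert \sum_{j=1}^{n}r_{ij}^{(n)}x_{j}\Bigr\Vert
_{H}^{2}=\sum_{j,k=1}^{n}\langle x_{j},x_{k}\rangle
\sum_{i=1}^{2^{n}}r_{ij}^{(n)}r_{ik}^{(n)}=2^{n}\sum_{j=1}^{n}\Vert
x_{j}\Vert _{H}^{2}
\end{equation*}
shows that $2^{-n/2}R_{n}$ is an \emph{isometry} of $\ell _{2}^{n}(H)$ into $\ell _{2}^{2^{n}}(H)$, from which both equalities of the lemma follow at once. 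Your remark that the recursive block structure of $R_{n}$ gives an alternative inductive verification of the Gram identity is also accurate; either route is sound.
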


\bigskip Let us introduce the following notation: for $1\leq p_{1}< \infty $
and $1\leq p_{2}\leq \infty $, we denote by $\ell _{p_{1}}\left( \ell
_{p_{2}}\right) $ the Banach space of the sequences $\mathbf{x}=\left(
x_{i_{1},i_{2}}\right) _{i_{1},i_{2}=1}^{\infty }$ such that
\begin{equation*}
\left\Vert \mathbf{x}\right\Vert _{\ell _{p_{1}}\left( \ell _{p_{2}}\right)
}:=\left( \sum_{i_{1}=1}^{\infty }\left\Vert \left( x_{i_{1},i_{2}}\right)
_{i_{2=1}}^{\infty }\right\Vert _{\ell _{p_{2}}}^{p_{1}}\right) ^{\frac{1}{%
p_{1}}}<+\infty
\end{equation*}%
and by $\ell _{\infty }\left( \ell _{p_{2}}\right) $ the Banach space of the
sequences $\mathbf{x}=\left( x_{i_{1},i_{2}}\right) _{i_{1},i_{2}=1}^{\infty
}$ such that%
\begin{equation*}
\left\Vert \mathbf{x}\right\Vert _{\ell _{\infty }\left( \ell
_{p_{2}}\right) }:=\sup_{i_{1}}\left\Vert \left( x_{i_{1},i_{2}}\right)
_{i_{2=1}}^{\infty }\right\Vert _{\ell _{p_{2}}}<\infty ,
\end{equation*}%
Inductively, for $\mathbf{p}=(p_{1},\dots ,p_{m})\in \lbrack 1,+\infty ]^{m}$%
, we can define the Banach space $\ell _{\mathbf{p}}$ by
\begin{equation*}
\ell _{\mathbf{p}}:=\ell _{p_{1}}\left( \ell _{p_{2}}\left( \cdots \left(
\ell _{p_{m}}\right) \cdots \right) \right) .
\end{equation*}%
Namely, a vector $\mathbf{x}=\left( x_{i_{1},...,i_{m}}\right)
_{i_{1},...,i_{m}=1}^{\infty }\in \ell _{\mathbf{p}}$ if, and only if,
\begin{equation*}
\left( \sum_{i_{1}=1}^{\infty }\left( \sum_{i_{2}=1}^{\infty }\left( \dots
\left( \sum_{i_{m-1}=1}^{\infty }\left( \sum_{i_{m}=1}^{\infty }\left\vert
x_{i_{1},...,i_{m}}\right\vert ^{p_{m}}\right) ^{\frac{p_{m-1}}{p_{m}}%
}\right) ^{\frac{p_{m-2}}{p_{m-1}}}\dots \right) ^{\frac{p_{2}}{p_{3}}%
}\right) ^{\frac{p_{1}}{p_{2}}}\right) ^{\frac{1}{p_{1}}}<+\infty ,
\end{equation*}%
(the usual modification is required if some $p_{j}=\infty $).

The next result is based on ideas borrowed from \cite[Theorem 3.2]{kato}. We
use standard notation and notions from interpolation theory, as presented
e.g. in \cite{Bergh}.

\begin{theorem}
Let $\mathbf{p}=(p_{1},\dots ,p_{m})\in (1,+\infty )^{m}$, and let $t:=\min
\left\{ p_{1},...,p_{m},p_{1}^{\ast },...,p_{m}^{\ast }\right\} $. Then,%
\begin{equation}
\left\Vert R_{n}:\ell _{t}^{n}\left( \ell _{\mathbf{p}}\right) \rightarrow
\ell _{s}^{2^{n}}\left( \ell _{\mathbf{p}}\right) \right\Vert =2^{\frac{n}{s}%
}  \label{star}
\end{equation}%
for any $s$, $1\leq s\leq t^{\ast }$ and all $n\in \mathbb{N}$. In other
words, $\ell _{\mathbf{p}}$ is of type $t$ and $T_{t,s}\left( \ell _{\mathbf{%
p}}\right) =1$, for all $1\leq s\leq t^{\ast }$.
\end{theorem}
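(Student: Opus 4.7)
The plan is to establish the two-sided bound $\|R_n : \ell_t^n(\ell_{\mathbf{p}}) \to \ell_s^{2^n}(\ell_{\mathbf{p}})\| = 2^{n/s}$ by separating the lower and upper bounds. For the lower bound, fix any unit vector $e \in \ell_{\mathbf{p}}$ and evaluate $R_n$ on the vector $x = (e, 0, \ldots, 0) \in \ell_t^n(\ell_{\mathbf{p}})$. Since the first column of $R_n$ consists only of $\pm 1$ entries, $R_n x = (r_{i1}^{(n)}\,e)_{i=1}^{2^n}$, so $\|R_n x\|_{\ell_s^{2^n}(\ell_{\mathbf{p}})} = 2^{n/s}$ against input norm $1$. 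Hence the content of the theorem is the matching upper bound.

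I would handle the upper bound in two stages. First, reduce to the endpoint $s = t^*$: once $\|R_n : \ell_t^n(\ell_{\mathbf{p}}) \to \ell_{t^*}^{2^n}(\ell_{\mathbf{p}})\| \leq 2^{n/t^*}$ is known, the H\"older inclusion $\ell_{t^*}^{2^n}(\ell_{\mathbf{p}}) \hookrightarrow \ell_s^{2^n}(\ell_{\mathbf{p}})$ has norm $2^{n(1/s - 1/t^*)}$ (the index set has cardinality $2^n$), which multiplied by $2^{n/t^*}$ yields exactly $2^{n/s}$ for any $1 \leq s \leq t^*$. The structural key for the endpoint estimate is the sandwich $t \leq p_k \leq t^*$ for every $k$, which holds by definition of $t$ since $t \leq p_k^*$ forces $p_k \leq t^*$.

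For the endpoint $s = t^*$, I would exploit this sandwich by iterated use of Minkowski's integral inequality. Unwinding $\ell_{\mathbf{p}} = \ell_{p_1}(\ell_{p_2}(\cdots(\ell_{p_m})))$, I push the outer $\ell_{t^*}^{2^n}$-norm inward past $\ell_{p_1}, \ell_{p_2}, \ldots, \ell_{p_m}$ in succession, using at each step the inequality $\|\cdot\|_{\ell_{t^*}(\ell_{p_k})} \leq \|\cdot\|_{\ell_{p_k}(\ell_{t^*})}$, which is valid precisely because $p_k \leq t^*$ (the smaller exponent on the outside yields the larger mixed norm). Once $\ell_{t^*}^{2^n}$ is innermost, the estimate reduces to the scalar Rademacher bound
\[
\Bigl(\sum_{i=1}^{2^n}\Bigl|\sum_{j=1}^n r_{ij}^{(n)} a_j\Bigr|^{t^*}\Bigr)^{1/t^*} \leq 2^{n/t^*}\Bigl(\sum_{j=1}^n |a_j|^t\Bigr)^{1/t},
\]
which I would obtain by Riesz--Thorin interpolation between the trivial endpoint $\|R_n : \ell_1^n \to \ell_\infty^{2^n}\| = 1$ and the Hilbert endpoint $\|R_n : \ell_2^n \to \ell_2^{2^n}\| = 2^{n/2}$ furnished by Lemma \ref{lemmajapa}. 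A symmetric round of iterated Minkowski, now pulling the resulting innermost $\ell_t^n$ back out past each $\ell_{p_k}$ (legitimate because $t \leq p_k$), produces the bound by $\|x\|_{\ell_t^n(\ell_{\mathbf{p}})}$, completing the endpoint estimate. The final ``in other words'' assertion, $\ell_{\mathbf{p}}$ has type $t$ with $T_{t,s}(\ell_{\mathbf{p}}) = 1$, is then immediate from Proposition \ref{propojapa}(i) together with the trivial lower bound $T_{t,s}(X) \geq 1$ (apply the type inequality to a single vector).

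The main obstacle, in my view, is not conceptual but bookkeeping: one must apply Minkowski in the correct direction at every swap. The two phases---pushing $\ell_{t^*}$ inward and pulling $\ell_t$ outward---rely respectively on the inequalities $p_k \leq t^*$ and $t \leq p_k$, so the hypothesis $t = \min\{p_k,p_k^*\}$ is used essentially twice, and it is exactly this double role that makes $t$ the correct choice of type index.
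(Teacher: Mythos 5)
Your proposal is correct, but your route to the endpoint estimate is genuinely different from the paper's. The paper proves $\left\Vert R_{n}:\ell _{t}^{n}\left( \ell _{\mathbf{p}}\right) \rightarrow \ell _{t^{\ast }}^{2^{n}}\left( \ell _{\mathbf{p}}\right) \right\Vert \leq 2^{n/t^{\ast }}$ by complex interpolation of the vector-valued mixed-norm spaces themselves: it splits into the cases $t=p_{k}$ and $t=p_{k}^{\ast }$, builds auxiliary exponent tuples $\mathbf{p}_{0}$ (resp.\ $\mathbf{p}_{1}$) with a $1$ (resp.\ $\infty $) in the $k$-th slot so that $\left( \ell _{\mathbf{p}_{0}},\ell _{\mathbf{2}}\right) _{\left[ \theta \right] }=\ell _{\mathbf{p}}$ with equal norms (via Theorems 4.2.1, 5.1.1 and 5.1.2 of Bergh--L\"{o}fstr\"{o}m), and then interpolates the operator $R_{n}$ between the trivial bound $\left\Vert R_{n}:\ell _{1}^{n}\left( \ell _{\mathbf{p}_{0}}\right) \rightarrow \ell _{\infty }^{2^{n}}\left( \ell _{\mathbf{p}_{0}}\right) \right\Vert =1$ and the Hilbert-space bound of Lemma \ref{lemmajapa}. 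You instead strip off the vector structure by iterated Minkowski (using $p_{k}\leq t^{\ast }$ to push $\ell _{t^{\ast }}$ innermost, and $t\leq p_{k}$ to pull $\ell _{t}$ outermost) and interpolate only scalars by Riesz--Thorin between $\ell _{1}^{n}\rightarrow \ell _{\infty }^{2^{n}}$ and $\ell _{2}^{n}\rightarrow \ell _{2}^{2^{n}}$; I checked the directions of both Minkowski phases and the choice $\theta =2/t^{\ast }$, and they are right. Note that the sandwich $t\leq p_{k}\leq t^{\ast }$ is exactly what makes the paper's auxiliary exponents $p_{i}^{0},p_{i}^{1}$ land in $[1,\infty ]$, so both arguments consume the same hypothesis, just in different places. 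Your version buys elementarity: no vector-valued interpolation identities, no case analysis (the Hilbert case and the cases $t=p_{k}$, $t=p_{k}^{\ast }$ are handled uniformly), and it transfers to real scalars by restriction since $R_{n}$ is a real matrix; the paper's version follows the Kato--Miyazaki--Takahashi scheme for $L_{p}(L_{q})$ and keeps the interpolation identities on record for reuse. The remaining ingredients --- the reduction from $1\leq s\leq t^{\ast }$ to $s=t^{\ast }$ via the inclusion $\ell _{t^{\ast }}^{2^{n}}\hookrightarrow \ell _{s}^{2^{n}}$ of norm $2^{n(1/s-1/t^{\ast })}$, the lower-bound witness supported in a single coordinate, and the passage to type via Proposition \ref{propojapa}(i) together with $T_{t,s}\geq 1$ --- coincide with the paper's.
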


\begin{proof}
It is enough to show (\ref{star}) for $s=t^{\ast }$. By Lemma \ref{lemmajapa}
we know that for all $n\in \mathbb{N}$,
\begin{equation}
\left\Vert R_{n}:\ell _{2}^{n}\left( \ell _{\mathbf{2}}\right) \rightarrow
\ell _{2}^{2^{n}}\left( \ell _{\mathbf{2}}\right) \right\Vert =2^{\frac{n}{2}%
},  \label{2}
\end{equation}%
because $\ell _{\mathbf{2}}:=\ell _{2}\left( \ell _{2}\left( \cdots \left(
\ell _{2}\right) \cdots \right) \right) $ is a Hilbert space.

Suppose that $p_{1},...,p_{m}$ are not all $2$. As a first step, let us show
\begin{equation}
\left\Vert R_{n}:\ell _{t}^{n}\left( \ell _{\mathbf{p}}\right) \rightarrow
\ell _{t^{\ast }}^{2^{n}}\left( \ell _{\mathbf{p}}\right) \right\Vert \leq
2^{\frac{n}{t^{\ast }}}.  \label{auxi}
\end{equation}

If $t=p_{k}$ for some $k\in \left\{ 1,...,m\right\} $ (obviously $p_{k}<2$),
put $\theta =\frac{2}{p_{k}^{\ast }}\in \left( 0,1\right) $ and
\begin{equation*}
\frac{1}{p_{i}^{0}}:=\frac{1/p_{i}-1/p_{k}^{\ast }}{1/p_{k}-1/p_{k}^{\ast }}
\end{equation*}%
for all $i\in \left\{ 1,...,m\right\} $, $i\not=k$.

Then, since
\begin{equation*}
\frac{1-\theta }{1}+\frac{\theta }{2}=\frac{1}{p_{k}},
\end{equation*}%
\begin{equation*}
\frac{1-\theta }{p_{i}^{0}}+\frac{\theta }{2}=\frac{1}{p_{i}},
\end{equation*}%
for all $i\in \left\{ 1,...,m\right\} $, $i\not=k$, we have by \cite[Theorem
5.1.1 and Theorem 5.1.2]{Bergh}
\begin{equation*}
\left( \ell_{\mathbf{p}_{0}},\ell_{\mathbf{2}}\right) _{\left[ \theta \right]
}=\ell_{\mathbf{p}}
\end{equation*}%
with equal norms, where $\mathbf{p}_{0}=(p_{1}^{0},\dots
,p_{k-1}^{0},1,p_{k+1}^{0},\dots ,p_{m}^{0})$.

With the same notation, since
\begin{equation*}
\frac{1-\theta }{\infty }+\frac{\theta }{2}=\frac{1}{p_{k}^{\ast }},
\end{equation*}%
we have by \cite[Theorem 4.2.1, Theorem 5.1.1 and Theorem 5.1.2]{Bergh}%
\begin{equation}
\left[ \ell _{1}^{n}\left( \ell _{\mathbf{p}_{0}}\right) ,\ell
_{2}^{n}\left( \ell _{\mathbf{2}}\right) \right] _{\left[ \theta \right]
}=\ell _{p_{k}}^{n}\left( \ell _{\mathbf{p}}\right)  \label{inter}
\end{equation}%
and%
\begin{equation}
\left[ \ell _{\infty }^{2^{n}}\left( \ell _{\mathbf{p}_{0}}\right) ,\ell
_{2}^{2^{n}}\left( \ell _{\mathbf{2}}\right) \right] _{\left[ \theta \right]
}=\ell _{p_{k}^{\ast }}^{2^{n}}\left( \ell _{\mathbf{p}}\right)  \label{pol}
\end{equation}%
with equal norms.

Computing,
\begin{equation}
\left\Vert R_{n}:\ell _{1}^{n}\left( \ell _{\mathbf{p}_{0}}\right)
\rightarrow \ell _{\infty }^{2^{n}}\left( \ell _{\mathbf{p}_{0}}\right)
\right\Vert =1  \label{1}
\end{equation}%
and interpolating (\ref{2}) and (\ref{1}) (by using \ref{inter}, and \ref%
{pol}) we have%
\begin{equation*}
\left\Vert R_{n}:\ell _{p_{k}}^{n}\left( \ell _{\mathbf{p}}\right)
\rightarrow \ell _{p_{k}^{\ast }}^{2^{n}}\left( \ell _{\mathbf{p}}\right)
\right\Vert \leq \left( 2^{\frac{n}{2}}\right) ^{\theta }=2^{\frac{n}{%
p_{k}^{\ast }}},
\end{equation*}%
and then (\ref{auxi}) is true if $t=p_{k}$ for some $k\in \left\{
1,...,m\right\} $.

If $t=p_{k}^{\ast }$ for some $k\in \left\{ 1,...,m\right\} $, (obviously $%
p_{k}>2$), put $\theta =\frac{2}{p_{k}}\in \left( 0,1\right) $ and
\begin{equation*}
\frac{1}{p_{i}^{1}}:=\frac{1/p_{i}-1/p_{k}}{1/p_{k}^{\ast }-1/p_{k}}
\end{equation*}%
for all $i\in \left\{ 1,...,m\right\} $, $i\not=k$.

Then, since
\begin{equation*}
\frac{1-\theta }{\infty }+\frac{\theta }{2}=\frac{1}{p_{k}},
\end{equation*}%
\begin{equation*}
\frac{1-\theta }{p_{i}^{1}}+\frac{\theta }{2}=\frac{1}{p_{i}},
\end{equation*}%
for all $i\in \left\{ 1,...,m\right\} $, $i\not=k$, we have%
\begin{equation*}
\left( \ell _{\mathbf{p}_{1}},\ell _{\mathbf{2}}\right) _{\left[ \theta %
\right] }=\ell _{\mathbf{p}}
\end{equation*}%
with equal norms, where $\mathbf{p}_{1}=(p_{1}^{1},\dots ,p_{k-1}^{1},\infty
,p_{k+1}^{1},\dots ,p_{m}^{1})$.

Keeping the notation, since
\begin{equation*}
\frac{1-\theta }{1}+\frac{\theta }{2}=\frac{1}{p_{k}^{\ast }},
\end{equation*}%
we have by \cite[Theorem 4.2.1, Theorem 5.1.1 and Theorem 5.1.2]{Bergh}%
\begin{equation}
\left[ \ell _{1}^{n}\left( \ell _{\mathbf{p}_{1}}\right) ,\ell
_{2}^{n}\left( \ell _{\mathbf{2}}\right) \right] _{\left[ \theta \right]
}=\ell _{p_{k}}^{n}\left( \ell _{\mathbf{p}}\right)  \label{inte}
\end{equation}%
and%
\begin{equation}
\left[ \ell _{\infty }^{2^{n}}\left( \ell _{\mathbf{p}_{1}}\right) ,\ell
_{2}^{2^{n}}\left( \ell _{\mathbf{2}}\right) \right] _{\left[ \theta \right]
}=\ell _{p_{k}^{\ast }}^{2^{n}}\left( \ell _{\mathbf{p}}\right)  \label{po}
\end{equation}%
with equal norms, and
\begin{equation}
\left\Vert R_{n}:\ell _{1}^{n}\left( \ell _{\mathbf{p}_{1}}\right)
\rightarrow \ell _{\infty }^{2^{n}}\left( \ell _{\mathbf{p}_{1}}\right)
\right\Vert =1.  \label{3}
\end{equation}
By using (\ref{inte}), (\ref{po}), and interpolating (\ref{2}) and (\ref{3})
we have%
\begin{equation*}
\left\Vert R_{n}:\ell _{p_{k}^{\ast }}^{n}\left( \ell _{\mathbf{p}}\right)
\rightarrow \ell _{p_{k}}^{2^{n}}\left( \ell _{\mathbf{p}}\right)
\right\Vert \leq \left( 2^{\frac{n}{2}}\right) ^{\theta }=2^{\frac{n}{p_{k}}%
}.
\end{equation*}%
Therefore, the inequality (\ref{auxi}) is true.

To show%
\begin{equation*}
\left\Vert R_{n}:\ell _{t}^{n}\left( \ell _{\mathbf{p}}\right) \rightarrow
\ell _{t^{\ast }}^{2^{n}}\left( \ell _{\mathbf{p}}\right) \right\Vert =2^{%
\frac{n}{t^{\ast }}},
\end{equation*}%
it is enough to see that the equality is attained with $(\mathbf{x},\mathbf{0%
},...,\mathbf{0})\in \ell _{t}^{n}\left( \ell _{\mathbf{p}}\right) $, $\
\mathbf{0}\not=\mathbf{x}=\left( x_{i_{1},...,i_{m}}\right)
_{i_{1},...,i_{m}=1}^{\infty }\in \ell _{\mathbf{p}}$, and the proof is done.
\end{proof}

The following result is a direct consequence of the above theorem and
Proposition \ref{propojapa}, using duality and the reflexivity of $\ell_{%
\mathbf{p}}$:

\begin{corollary}
\label{cotlp}Let $\mathbf{p}=(p_{1},\dots ,p_{m})\in (1,+\infty )^{m}$, and
let $t:=\min \left\{ p_{1},...,p_{m},p_{1}^{\ast },...,p_{m}^{\ast }\right\}
$. Then, for any $s$ with $t\leq s<\infty $, we have%
\begin{equation*}
\left\Vert t_{R_{n}}:\ell _{s}^{2^{n}}\left( \ell _{\mathbf{p}}\right)
\rightarrow \ell _{t^{\ast }}^{n}\left( \ell _{\mathbf{p}}\right)
\right\Vert =2^{\frac{n}{s^{\ast }}}
\end{equation*}%
for all $n\in \mathbb{N}$. Hence, $\ell _{\mathbf{p}}$ is of cotype $t^{\ast
}$ and
\begin{equation}
C_{t^{\ast },s}\left( \ell _{\mathbf{p}}\right) =1,  \label{888999}
\end{equation}%
for all $t\leq s<\infty $.
\end{corollary}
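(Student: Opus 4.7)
The plan is to deduce the corollary from the preceding Theorem by a duality argument, exploiting the symmetry of $t=\min\{p_1,\ldots,p_m,p_1^*,\ldots,p_m^*\}$ under $\mathbf{p}\leftrightarrow\mathbf{p}^*$ and the reflexivity of $\ell_{\mathbf{p}}$. First I would apply the Theorem with $\mathbf{p}$ replaced by $\mathbf{p}^*=(p_1^*,\ldots,p_m^*)\in(1,+\infty)^m$, noting that the quantity $t$ is unchanged since $\min\{p_1^{**},\ldots,p_m^{**},p_1^*,\ldots,p_m^*\}=\min\{p_1,\ldots,p_m,p_1^*,\ldots,p_m^*\}$. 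This gives, for every $s'\in[1,t^*]$ and $n\in\mathbb{N}$,
\[
\left\Vert R_n:\ell_t^n(\ell_{\mathbf{p}^*})\to\ell_{s'}^{2^n}(\ell_{\mathbf{p}^*})\right\Vert=2^{n/s'}.
\]

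Next I would pass to Banach-space adjoints. Because every $p_i\in(1,+\infty)$, the space $\ell_{\mathbf{p}}$ is reflexive and an iterated use of the standard identity $\ell_p(X)^*=\ell_{p^*}(X^*)$ yields $(\ell_{\mathbf{p}^*})^*=\ell_{\mathbf{p}}$ together with the isometric identifications $(\ell_{s'}^{2^n}(\ell_{\mathbf{p}^*}))^*=\ell_{s'^*}^{2^n}(\ell_{\mathbf{p}})$ and $(\ell_t^n(\ell_{\mathbf{p}^*}))^*=\ell_{t^*}^n(\ell_{\mathbf{p}})$. Since $R_n$ is realized by a real $\pm1$ matrix, its Banach-space adjoint is the transpose operator $t_{R_n}$, and taking adjoints preserves operator norms. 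Setting $s=s'^*$, so that $s'\in[1,t^*]$ corresponds bijectively to $s\in[t,+\infty]$, this yields
\[
\left\Vert t_{R_n}:\ell_s^{2^n}(\ell_{\mathbf{p}})\to\ell_{t^*}^n(\ell_{\mathbf{p}})\right\Vert=2^{n/s^*}
\]
for all $t\le s<\infty$ and all $n\in\mathbb{N}$, which is the displayed identity of the corollary.

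To finish, I would invoke Proposition \ref{propojapa}(ii) with $M=1$ to conclude that $\ell_{\mathbf{p}}$ has cotype $t^*$ with $C_{t^*,s}(\ell_{\mathbf{p}})\le 1$. The matching lower bound $C_{t^*,s}(\ell_{\mathbf{p}})\ge 1$ is routine: plugging a single nonzero vector $x_1\in\ell_{\mathbf{p}}$ into the defining inequality (\ref{99}) yields $\|x_1\|\le C_{t^*,s}(\ell_{\mathbf{p}})\|x_1\|$. Combining the two estimates gives (\ref{888999}). There is essentially no serious obstacle beyond bookkeeping; the one point deserving explicit verification is the isometric nature of the iterated duality $(\ell_{\mathbf{p}^*})^*=\ell_{\mathbf{p}}$ and its compatibility with the matrix action of $R_n$, both of which are standard. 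Note that the excluded endpoint $s'=1$ in the Theorem corresponds to $s=+\infty$, which falls outside the corollary's range $t\le s<\infty$, so no endpoint delicacy arises.
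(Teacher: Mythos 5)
Your proposal is correct and follows exactly the route the paper intends: the paper states the corollary as ``a direct consequence of the above theorem and Proposition \ref{propojapa}, using duality and the reflexivity of $\ell_{\mathbf{p}}$,'' and you have simply filled in those details — applying the theorem to $\mathbf{p}^{\ast}$ (noting $t$ is unchanged), passing to adjoints via the isometric dualities $\ell_{p}(X)^{\ast}=\ell_{p^{\ast}}(X^{\ast})$ so that $R_{n}^{\ast}=t_{R_{n}}$ with the same norm, and then invoking Proposition \ref{propojapa}(ii) with $M=1$ together with the trivial one-vector lower bound $C_{t^{\ast},s}(\ell_{\mathbf{p}})\geq 1$. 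Your handling of the parameter range ($s'\in(1,t^{\ast}]$ corresponding to $s\in[t,\infty)$, with the endpoint $s'=1$ harmlessly excluded) is also accurate.
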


\smallskip

\begin{remark}
The above corollary was proved by using \emph{complex interpolation}, for
the case of complex scalars but from the very definition of cotype it is
obvious that (\ref{888999}) also holds for real Banach spaces.
\end{remark}

\medskip

\begin{remark}
(\textbf{Optimal constants for Theorems \ref{661} and \ref{t1multi}}) (1)
From the previous results we conclude that in Theorem \ref{661}, when $%
Y=\ell _{r}$ (over the real or complex field) with $r\in \lbrack 2,\infty )$%
, if (b) is true, then the optimal constant in the inequality (a) satisfies $%
C_{p_{1},...,p_{m}}^{\ell _{r}}=1.$ In fact, the case $m=1$ is immediate.
For $m=2$, note that $\ell _{\lambda _{1,r}^{p_{2}}}\left( \ell _{r}\right) $
has cotype $\lambda _{1,r}^{p_{2}}:=R>r\geq 2$ (by Corollary \ref{cotlp})
with $C_{R}(\ell _{R}\left( \ell _{r}\right) )=1$, and thus, following the
proof of Theorem \ref{661}, $C_{p_{1},p_{2}}^{\ell _{r}}\leq C_{R}(\ell
_{R}\left( \ell _{r}\right) )C_{r}\left( \ell _{r}\right) =1$. For the case $%
m=3$, following the proof of Theorem \ref{661}, we have%
\begin{equation*}
C_{p_{1},p_{2},p_{3}}^{\ell _{r}}\leq C_{\lambda _{2,r}^{p_{2},p_{3}}}\left(
\ell _{\lambda _{2,r}^{p_{2},p_{3}}}\left( \ell _{\lambda
_{1,r}^{p_{3}}}\left( \ell _{r}\right) \right) \right) C_{\lambda
_{1,r}^{p_{3}}}\left( \ell _{\lambda _{1,r}^{p_{3}}}\left( \ell _{r}\right)
\right) C_{r}\left( \ell _{r}\right) =1
\end{equation*}%
by Corollary \ref{cotlp} and the proof follows inductively.

(2) If (b) is true in Theorem \ref{t1multi} the optimal constat $%
C_{p_{1},...,p_{m}}$ in (a) is $1$, because the $\left( m-1\right) $-linear
operator used in the argument of the proof of (b)$\Rightarrow $(a) has range
$\ell _{\delta _{1}^{p_{m}}}=\ell _{\left( p_{m}\right) ^{\ast }}$ and $%
\left( p_{m}\right) ^{\ast }\geq 2$. By the first item of this remark, we
know that
\begin{equation*}
C_{p_{1},...,p_{m-1}}^{\ell _{\delta _{1}^{p_{m}}}}=1.
\end{equation*}
\end{remark}

\medskip

\begin{remark}
All the above results can be translated to the setting of multiple summing
operators (for recent results on multiple summing operators we refer to \cite%
{anss, popa2, bla, popa1, rueda} and references therein). In fact, we just
need to consider the more general concept of multiple summing operators
introduced in \cite{pilar} and recall how to translate coincidence
situations like the Bohnenblust--Hille inequality and Hardy--Littlewood
inequalities to multiple summing operators (see, for instance, \cite{dimant}
and \cite[Corollary 3.20]{pv}).
\end{remark}

\textbf{Acknowledgement.} Much of this work was done while the second and
fourth authors were visitors to the Department of Mathematics of Kent State
University. These authors offer their thanks to this Department, and also to
Manuel Maestre, for making their visit productive.

\end{document}